   \def\dN{{\mathbb N}}
\def\cG{{\mathcal G}}   \def\cH{{\mathcal H}}
\def\R{\mathbb{R}}
\def\C{\mathbb{C}}
\def\ran{{\text{\rm ran\,}}}
\def\dom{{\text{\rm dom\,}}}
\def\min{{\rm min\,}}
\def\moverline#1{\ThisStyle{%
  \setbox0=\hbox{$\SavedStyle#1$}%
  \stackengine{1.4\LMpt}{$\SavedStyle#1$}{\rule{\wd0}{.3\LMpt}}{0}{c}{F}{F}{S}%
}}
 \newcommand{\myoverline}[1]{\mkern 1mu\moverline{\mkern-1mu#1\mkern-1mu}\mkern 1mu}
\newtheorem{theorem}{Theorem}[section]
\newtheorem*{thm*}{Theorem}
\newtheorem{proposition}[theorem]{Proposition}
\newtheorem{corollary}[theorem]{Corollary}
\newtheorem{lemma}[theorem]{Lemma}
\theoremstyle{definition}
\newtheorem{definition}[theorem]{Definition}
\newtheorem{assumption}[theorem]{Assumption}
\numberwithin{equation}{section}
\title[]{Generalized boundary triples for adjoint pairs with applications to non-self-adjoint Schrödinger operators}
\author{Antonio Arnal}
\author{Jussi Behrndt}
\author{Markus Holzmann\textsuperscript{*}}
\thanks{\noindent\textsuperscript{*} Corresponding author,\quad E-mail:~{\tt holzmann@math.tugraz.at}}
\author{Petr Siegl}
\address{Institut f\"ur Angewandte Mathematik \\
 Technische Universit\"at Graz \\
 Steyrergasse 30\\
 A-8010 Graz\\
 Austria}
 \email{aarnalperez01@qub.ac.uk,behrndt@tugraz.at, holzmann@math.tugraz.at, siegl@tugraz.at}
\begin{document}

\dedicatory{
Happy Birthday, Henk$\,$!\\
We dedicate this note to our friend
and colleague Henk de Snoo on the occasion of his 80th birthday.}

\begin{abstract}
We extend the notion of generalized boundary triples and their Weyl functions from extension theory of symmetric operators  
to adjoint pairs of operators, and we provide criteria on the boundary parameters to induce closed operators with a nonempty resolvent set.
The abstract results are applied to Schr\"odinger operators with  complex $L^p$-potentials on bounded and unbounded 
Lipschitz domains with compact boundaries. 
\end{abstract}

\maketitle

\section{Introduction}

Abstract boundary value problems for symmetric and self-adjoint operators in Hilbert spaces can be efficiently treated within the framework of boundary triples and their Weyl functions, a technique that
is nowadays well developed and applied in various concrete and abstract settings, see, e.g., the monograph \cite{BHS20} for an introduction into this field,
some typical applications to differential operators, and further references.   Henk's contributions
on boundary triples and their generalizations have inspired many mathematicians in modern analysis,
differential equations, and spectral theory. In particular, abstract coupling techniques for boundary triples 
from \cite{DHMS00} or 
the notion of boundary relations and their Weyl families from \cite{DHMS06,DHMS09,DHMS12}, developed jointly with V.A.~Derkach, S.~Hassi, and M.M.~Malamud, have had a substantial impact on the field. 

The aim of this note is to consider a class of non-symmetric abstract boundary value problems in the context of 
adjoint pairs of operators. Such problems have their roots in the works
of M.I. Visik \cite{V52}, M.S. Birman \cite{B62}, and G. Grubb \cite{G68}, and have been 
further developed in the framework of (ordinary) boundary triples by L.I. Vainerman in \cite{V80} and 
V.E.~Lyantse and O.G.~Storozh in 
the monograph \cite{LS83}, see also 
\cite{BGHN17,BGW09,BHMNW09,BMNW08,HMM05,HMM13,MM97,MM99,MM02,M06} for more recent developments and, e.g., 
\cite{AB10,ABCE17,AEM17,AEKS14,A00,A12,BEHL18,EGC07,GM08,G08,LT77,LS22,LS23,M10,MPS16,P04,P08,P07,P13,W17,W13} for other closely related approaches and  typical  
applications.  In fact, our main objective is to introduce and to study the notion of generalized boundary triples and their Weyl functions for adjoint pairs of operators, extending the 
definition by V.A.~Derkach and M.M.~Malamud from \cite{DM95} (see also \cite{DHM20,DHM22}) from the symmetric to the non-symmetric setting. At the same time the present considerations can be viewed as a special case of the treatment in \cite{B25}, where the notion of quasi boundary triples and their Weyl functions for symmetric operators from \cite{BL07,BL12,BLLR18,BM14} was extended to the general framework of adjoint pairs under minimal assumptions on the boundary maps. Although our abstract treatment in Section~\ref{gbt-section}
in this sense is contained in \cite{B25}, many of the general results from \cite{B25} simplify substantially 
in their assumptions and their formulation. 

The abstract notion of generalized 
boundary triples for adjoint pairs turns out to be useful as it can be applied directly to Schr\"{o}dinger operators 
with complex potentials on Lipschitz domains $\Omega$. More precisely, in Section~\ref{diffopsec} we consider a class 
of non-selfadjoint relatively bounded perturbations $V\in L^p(\Omega)$ of the Laplacian and
provide a generalized boundary triple for the adjoint pair $\{-\Delta + V,-\Delta + \overline V\}$,
where the boundary maps are the Dirichlet and Neumann trace operators $\tau_D$ and $\tau_N$; here we rely on  
properties of the trace operators on Lipschitz domains with compact boundaries that can found in, e.g., \cite{BGM25,GM11} and are recalled in the appendix.
We apply this generalized boundary triple to obtain sufficient criteria for boundary parameters in $L^2(\partial\Omega)$ to induce closed realizations 
of $-\Delta + V$ and $-\Delta + \overline V$ with nonempty resolvent set in $L^2(\Omega)$. More precisely, if the complex potential 
$V\in L^p(\Omega)$ satisfies Assumption~\ref{assi},
we conclude in 
Corollary~\ref{corollary_Robin} that the Robin realization 
\begin{equation}\label{abab}
\begin{split}
 A_B&=-\Delta+ V,\\ 
 \dom A_B&=\bigl\{f\in H^{3/2}(\Omega):\tau_N f=B\tau_D f,\, (-\Delta +V)f\in L^2(\Omega)\bigr\},
\end{split}
 \end{equation}
is a closed operator in $L^2(\Omega)$ with a nonempty resolvent set; here it is assumed that 
the boundary parameter $B$ is a bounded everywhere defined operator in $L^2(\partial\Omega)$. 
Furthermore, if $A_0$ denotes the Neumann realization 
of $-\Delta+V$, and $\gamma,\widetilde\gamma$, and $M$ are the $\gamma$-fields and Weyl function, respectively, associated with the generalized boundary triple in Theorem~\ref{gbthurra}, then the Krein-type resolvent formula
\begin{equation}\label{Eq_Krein_formulaintrr}
(A_{B}-\lambda)^{-1}=(A_0-\lambda)^{-1}+\gamma(\lambda)\bigl(I-BM(\lambda)\bigr)^{-1}B\widetilde\gamma(\myoverline{\lambda})^*
\end{equation}
is valid for all $\lambda\in\rho(A_0)\cap\rho(A_{B})$. One of the key ingredients in the proofs is to 
ensure a decay of the norm of the Weyl function $M$ along the negative axis, so that the operator $I-BM(\lambda)$ in \eqref{Eq_Krein_formulaintrr}
admits a bounded everywhere defined inverse in $L^2(\partial\Omega)$. This technique is inspired by \cite{BLLR18}, where similar methods 
were developed for non-self-adjoint extensions of symmetric operators in the context of quasi boundary triples.
We point out that the functions in the domain of $A_B$ in \eqref{abab} exhibit 
$H^{3/2}$-regularity, which is natural for realizations of the Laplacian on (bounded) Lipschitz domains, such as Dirichlet and Neumann realizations;
cf. \cite{JK81,JK95} and \cite{BGM25} for more references and details. We also refer the reader to \cite{AGH19,BK08,BK12,BKL22,CK20,GM08,GM09-2,GM09,KRRS18,PP16} for related recent contributions on self-adjoint and non-self-adjoint 
Robin type boundary conditions. 

\subsection*{Notations}

For a linear operator $A$ its domain, kernel, and range are denoted by $\dom A$, $\ker A$, and $\ran A$, respectively. If $A$ is a closed operator in a Hilbert space, then the symbols $\rho(A)$, $\sigma(A)$, and $\sigma_p(A)$ are used for the resolvent set, the spectrum, and the point spectrum of $A$. Next, for an open set $\Omega \subset \mathbb{R}^n$ and $s \geq 0$ the sets $H^s(\Omega)$ are the $L^2$-based Sobolev spaces of order $s$, and for a bounded or unbounded Lipschitz domain $\Omega$ with compact boundary and $s \in [-1,1]$ the $L^2$-based Sobolev spaces on $\partial \Omega $ are $H^s(\partial \Omega)$; cf. \cite{M00} for their definitions. Finally, for a Banach space $X$ its dual space is $X^*$.

\section{Generalized boundary triples for adjoint pairs}\label{gbt-section}

Let $\cH$ be a separable Hilbert space and assume that 
$S$ and $\widetilde S$ are densely defined closed operators in $\cH$ which satisfy the identity
\begin{equation}\label{adjointis}
 (Sf,g)=(f,\widetilde S g),\qquad f\in\dom S,\, g\in\dom \widetilde S.
\end{equation}
It is clear that \eqref{adjointis} is equivalent to $\widetilde S\subset S^*$ or $S\subset \widetilde S^*$.
In the following a pair $\{S,\widetilde S\}$ with the property \eqref{adjointis} will be called an {\it adjoint pair}. Furthermore, we will make use of
another pair of operators $\{T,\widetilde T\}$ such that $ T\subset S^*$ and 
$\widetilde T\subset\widetilde S^*$ and, in addition, 
\begin{equation}\label{tja}
 \myoverline T=S^*\quad\text{and}\quad \myoverline{\widetilde T}=\widetilde S^*
\end{equation}
holds.
Note that \eqref{tja} is equivalent to $T^*=S$ and $\widetilde T^*=\widetilde S$.  A pair of operators $\{T,\widetilde T\}$ with the property \eqref{tja}
will be called a core of $\{S^*,\widetilde S^*\}$.

\subsection{Generalized boundary triples}
The next definition is a special case of \cite[Definition 2.1]{B25} and should be viewed as the natural generalization of the concept
of generalized boundary triples for symmetric operators from \cite{DM95}, see also \cite{BMN17,DHMS06,DHMS12}.

\begin{definition}\label{gbt}
Let $\{S,\widetilde S\}$ be an adjoint pair of operators and 
let $\{T,\widetilde T\}$ be a core of $\{S^*,\widetilde S^*\}$. A {\em generalized boundary triple} $\{\cG,(\Gamma_0,\Gamma_1),(\widetilde\Gamma_0,\widetilde\Gamma_1)\}$ for  $\{S,\widetilde S\}$ consists of a  Hilbert space $\cG$ and linear mappings
\begin{equation*}
 \Gamma_0,\Gamma_1:\dom T\rightarrow\cG\quad\text{and}\quad \widetilde\Gamma_0,\widetilde\Gamma_1:\dom \widetilde T\rightarrow\cG
\end{equation*}
such that the following holds:
\begin{enumerate}[\upshape (i)]
 \item the abstract {\em Green identity}
\begin{equation*}
 (Tf,g)_\cH-(f,\widetilde Tg)_\cH=(\Gamma_1 f,\widetilde \Gamma_0 g)_\cG-(\Gamma_0 f,\widetilde\Gamma_1 g)_\cG
\end{equation*}
is valid for all $f\in\dom T$ and $g\in\dom \widetilde T$,
\item the mappings $\Gamma_0:\dom T\rightarrow\cG$ and $\widetilde\Gamma_0:\dom \widetilde T\rightarrow \cG$ are surjective,
\item the operators $A_0:=T\upharpoonright\ker\Gamma_0$ and $\widetilde A_0:=\widetilde T\upharpoonright\ker\widetilde \Gamma_0$ satisfy
\begin{equation}\label{sternis}
A_0^*=\widetilde A_0\quad\text{and}\quad \widetilde A_0^*=A_0.
\end{equation}
\end{enumerate}
\end{definition}

Let $\{\cG,(\Gamma_0,\Gamma_1),(\widetilde\Gamma_0,\widetilde\Gamma_1)\}$ be a generalized boundary triple for $\{S,\widetilde S\}$ and let 
$\{T,\widetilde T\}$ be a core of $\{S^*,\widetilde S^*\}$. We briefly recall some immediate properties that follow from the more general treatment
in \cite{B25} and are known for the symmetric case from \cite{DM95}. 
We remark first that (iii) implies 
\begin{equation*}
\widetilde S\subset T\subset S^*\quad\text{and}\quad S\subset \widetilde T\subset \widetilde S^*.
\end{equation*}
According to \cite[Lemma 2.4]{B25} we have 
\begin{equation*}
  \dom S=\ker\widetilde\Gamma_0\cap\ker\widetilde\Gamma_1\quad\text{and}\quad
  \dom \widetilde S=\ker\Gamma_0\cap\ker\Gamma_1,
 \end{equation*}
 and $\ran(\Gamma_0,\Gamma_1)^\top$ and $\ran(\widetilde\Gamma_0,\widetilde\Gamma_1)^\top$ are both dense in $\cG\times \cG$ by \cite[Lemma 2.3]{B25}.
Furthermore, the mappings $\Gamma_0,\Gamma_1:\dom T\rightarrow\cG$ and $\widetilde\Gamma_0,\widetilde\Gamma_1:\dom\widetilde T\rightarrow\cG$
are closable with respect to the graph norm of $T$ and $\widetilde T$, respectively. 
It is also important to note that the pair $\{T,\widetilde T\}$ is not unique and may even coincide with $\{S^*,\widetilde S^*\}$, which is the case
if, e.g., 
\begin{equation}\label{deffinite}
\dim\bigl(\dom S^*/\dom\widetilde S\bigr)= \dim\bigl(\dom \widetilde S^*/\dom S\bigr)<\infty.
\end{equation}
In the special situation that $\{T,\widetilde T\}=\{S^*,\widetilde S^*\}$ it follows from \cite[Proposition~2.6]{B25} that 
$\ran(\Gamma_0,\Gamma_1)^\top=\cG\times\cG$ and $\ran(\widetilde\Gamma_0,\widetilde\Gamma_1)^\top=\cG\times\cG$, which implies that the restrictions
$A_0=T\upharpoonright\ker\Gamma_0$ and $\widetilde A_0=\widetilde T\upharpoonright\ker\widetilde \Gamma_0$ automatically satisfy \eqref{sternis}; cf. 
\cite{MM02}.
Thus, in this situation the notion of generalized boundary triples from Definition~\ref{gbt} reduces to the special case treated in 
\cite{BMNW08,MM02,LS83,V80}. In the following we will be interested in the case 
\begin{equation*}
\dim\bigl(\dom S^*/\dom\widetilde S\bigr)= \dim\bigl(\dom \widetilde S^*/\dom S\bigr)=\infty,
\end{equation*}
although our abstract discussion remains valid (in a simplified form) also in the finite dimensional case \eqref{deffinite}.

The next result is a variant of \cite[Theorem 2.7]{B25} and can be used to verify that a pair $\{T,\widetilde T\}$
is a core of the adjoints of suitable operators $S$ and $\widetilde S$, respectively. 

\begin{theorem}\label{ratemal-gbtversion}
 Let $\cH$ and $\cG$ be Hilbert spaces, let $T$ and $\widetilde T$ be operators in $\cH$, and assume that there are linear mappings 
 \begin{equation*}
  \Gamma_0,\Gamma_1:\dom T\rightarrow\cG\quad\text{and}\quad\widetilde\Gamma_0,\widetilde\Gamma_1:\dom \widetilde T\rightarrow\cG
 \end{equation*}
 such that the following holds:
\begin{enumerate}[\upshape (i)]
  \item the abstract Green identity 
  \begin{equation*}
 (Tf,g)_\cH-(f,\widetilde Tg)_\cH=(\Gamma_1 f,\widetilde \Gamma_0 g)_\cG-(\Gamma_0 f,\widetilde\Gamma_1 g)_\cG
\end{equation*}
is valid for all $f\in\dom T$ and $g\in\dom \widetilde T$,
  \item the mappings $\Gamma_0:\dom T\rightarrow\cG$ and 
  $\widetilde\Gamma_0:\dom \widetilde T\rightarrow\cG$ are surjective,
 \item the operators $A_0:=T\upharpoonright\ker\Gamma_0$ and $\widetilde A_0:=\widetilde T\upharpoonright\ker\widetilde \Gamma_0$ satisfy 
\begin{equation*}
A_0^*=\widetilde A_0\quad\text{and}\quad \widetilde A_0^*=A_0,
\end{equation*}
\item $\ker\Gamma_0\cap\ker\Gamma_1$ and $\ker\widetilde\Gamma_0\cap\ker\widetilde\Gamma_1$  are dense in $\cH$.
 \end{enumerate}
Then
\begin{equation*}
\begin{split}
 Sf:= \widetilde Tf,&\quad f\in \dom S=\ker\widetilde\Gamma_0\cap\ker\widetilde\Gamma_1,\\
 \widetilde Sg:=  Tg,&\quad g\in\dom \widetilde S=\ker\Gamma_0\cap\ker\Gamma_1,
\end{split}
\end{equation*}
are closed operators in $\cH$ and form an adjoint pair $\{S,\widetilde S\}$ such that $\{T,\widetilde T\}$ is a core of $\{S^*,\widetilde S^*\}$, and 
$\{\cG,(\Gamma_0,\Gamma_1),(\widetilde\Gamma_0,\widetilde\Gamma_1)\}$ is a generalized boundary triple for $\{S,\widetilde S\}$.
\end{theorem}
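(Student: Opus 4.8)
The plan is to derive everything from the two operator identities $T^{*}=S$ and $\widetilde T^{*}=\widetilde S$, after which all assertions of the theorem follow from standard facts about adjoints. As a preliminary observation, hypothesis (iv) says that $\dom\widetilde S=\ker\Gamma_{0}\cap\ker\Gamma_{1}$ is dense in $\cH$, and since $\ker\Gamma_{0}\cap\ker\Gamma_{1}\subset\ker\Gamma_{0}=\dom A_{0}\subset\dom T$, the operators $T$ and $A_{0}$ are densely defined; symmetrically $\widetilde T$ and $\widetilde A_{0}$ are densely defined. Hence $T^{*}$, $\widetilde T^{*}$, $A_{0}^{*}$ and $\widetilde A_{0}^{*}$ all make sense and condition (iii) is meaningful.

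First I would check $S\subset T^{*}$: for $u\in\dom S=\ker\widetilde\Gamma_{0}\cap\ker\widetilde\Gamma_{1}\subset\dom\widetilde T$ and any $f\in\dom T$, the Green identity (i) gives $(Tf,u)_{\cH}-(f,\widetilde Tu)_{\cH}=(\Gamma_{1}f,\widetilde\Gamma_{0}u)_{\cG}-(\Gamma_{0}f,\widetilde\Gamma_{1}u)_{\cG}=0$, so $u\in\dom T^{*}$ with $T^{*}u=\widetilde Tu=Su$. For the reverse inclusion, take $u\in\dom T^{*}$ and set $v:=T^{*}u$, so $(Tf,u)_{\cH}=(f,v)_{\cH}$ for all $f\in\dom T$. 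Restricting this to $f\in\dom A_{0}=\ker\Gamma_{0}$ shows $u\in\dom A_{0}^{*}$ and $A_{0}^{*}u=v$; by (iii) we have $A_{0}^{*}=\widetilde A_{0}$, hence $u\in\dom\widetilde A_{0}=\ker\widetilde\Gamma_{0}$, which in particular means $u\in\dom\widetilde T$, $\widetilde\Gamma_{0}u=0$ and $\widetilde Tu=v$. Plugging these back into the Green identity (i) for an arbitrary $f\in\dom T$ yields $0=(Tf,u)_{\cH}-(f,v)_{\cH}=-(\Gamma_{0}f,\widetilde\Gamma_{1}u)_{\cG}$, and since $\Gamma_{0}$ is surjective by (ii) we get $\widetilde\Gamma_{1}u=0$. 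Thus $u\in\ker\widetilde\Gamma_{0}\cap\ker\widetilde\Gamma_{1}=\dom S$ and $T^{*}u=v=Su$, so $T^{*}=S$. Interchanging the roles of $(T,\Gamma_{0},\Gamma_{1})$ and $(\widetilde T,\widetilde\Gamma_{0},\widetilde\Gamma_{1})$ gives $\widetilde T^{*}=\widetilde S$ by the identical argument.

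From here the conclusions follow quickly. The operators $S=T^{*}$ and $\widetilde S=\widetilde T^{*}$ are closed, and they are densely defined by (iv). Since $T^{*}=S$ is densely defined, $T$ is closable with $\myoverline T=T^{**}=S^{*}$, and similarly $\myoverline{\widetilde T}=\widetilde S^{*}$; together with $T\subset\myoverline T=S^{*}$ and $\widetilde T\subset\myoverline{\widetilde T}=\widetilde S^{*}$ this means $\{T,\widetilde T\}$ is a core of $\{S^{*},\widetilde S^{*}\}$. Moreover $\widetilde S=T\upharpoonright(\ker\Gamma_{0}\cap\ker\Gamma_{1})\subset T\subset S^{*}$, so $\{S,\widetilde S\}$ is an adjoint pair in the sense of \eqref{adjointis}. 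Finally, $\{\cG,(\Gamma_{0},\Gamma_{1}),(\widetilde\Gamma_{0},\widetilde\Gamma_{1})\}$ is a generalized boundary triple for $\{S,\widetilde S\}$ in the sense of Definition~\ref{gbt}: the boundary maps are defined on $\dom T$ and $\dom\widetilde T$ for the core just identified, and items (i), (ii), (iii) of Definition~\ref{gbt} are literally hypotheses (i), (ii), (iii) of the theorem, with the same operators $A_{0}$ and $\widetilde A_{0}$.

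The step requiring the most care is the inclusion $\dom T^{*}\subset\ker\widetilde\Gamma_{0}\cap\ker\widetilde\Gamma_{1}$: there is no a priori reason for an element of $\dom T^{*}$ to lie in $\dom\widetilde T$ at all, and it is precisely assumption (iii) — the mutual adjointness of $A_{0}$ and $\widetilde A_{0}$ — that upgrades the relation ``$u$ is paired correctly against $\ker\Gamma_{0}$'' to ``$u\in\dom\widetilde T$ with $\widetilde\Gamma_{0}u=0$''. The surjectivity in (ii) then converts the leftover Green-identity term into $\widetilde\Gamma_{1}u=0$. Everything else is routine manipulation of adjoints and the Green identity, and the density hypothesis (iv) is used only to make the adjoints well defined and to guarantee that $S$ and $\widetilde S$ are densely defined.
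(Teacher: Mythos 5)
Your proof is correct: establishing $T^{*}=S$ and $\widetilde T^{*}=\widetilde S$ via the Green identity, assumption (iii), and the surjectivity of $\Gamma_{0},\widetilde\Gamma_{0}$, and then reading off closedness, the adjoint-pair property, and $\myoverline{T}=S^{*}$, $\myoverline{\widetilde T}=\widetilde S^{*}$ is exactly the expected route; the paper itself gives no proof of Theorem~\ref{ratemal-gbtversion} but refers to \cite{B25}, and your argument matches that standard approach. The delicate step — upgrading $u\in\dom T^{*}$ to $u\in\ker\widetilde\Gamma_{0}$ via $A_{0}^{*}=\widetilde A_{0}$ and then to $\widetilde\Gamma_{1}u=0$ via surjectivity of $\Gamma_{0}$ — is carried out correctly, and the density hypothesis (iv) is properly invoked to make all adjoints and biadjoints well defined.
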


\subsection{$\gamma$-fields and Weyl functions}

In the following let $\{\cG,(\Gamma_0,\Gamma_1),(\widetilde\Gamma_0,\widetilde\Gamma_1)\}$ be a generalized boundary triple for $\{S,\widetilde S\}$ 
and assume that the resolvent sets of both operators $A_0=T\upharpoonright\ker\Gamma_0$ and 
$\widetilde A_0=\widetilde T\upharpoonright\ker\widetilde\Gamma_0$ are nonempty. In this case we have $\lambda\in\rho(A_0)$ 
if and only if $\myoverline\lambda\in\rho(\widetilde A_0)$.
Next, recall the direct sum decompositions 
\begin{equation*}
\begin{split}
 \dom T=\dom A_0\,\dot +\,\ker(T-\lambda)=\ker \Gamma_0\,\dot +\,\ker(T-\lambda),\qquad\lambda\in\rho(A_0),\\
 \dom \widetilde T=\dom \widetilde A_0\,\dot +\,\ker(\widetilde T-\mu)=\ker \widetilde\Gamma_0\,\dot +\,\ker(\widetilde T-\mu),\qquad\mu\in\rho(\widetilde A_0).
 \end{split}
 \end{equation*}
 
We introduce the $\gamma$-fields and Weyl functions following the ideas in \cite{B25,BL07,DHMS06,DM95,MM97,MM99,MM02}.
 
 \begin{definition}
 Let  $\{\cG,(\Gamma_0,\Gamma_1),(\widetilde\Gamma_0,\widetilde\Gamma_1)\}$ be a generalized boundary triple and assume 
 that $\rho(A_0)\not=\emptyset$ or, equivalently, $\rho(\widetilde A_0)\not=\emptyset$. 
\begin{enumerate}[\upshape (i)]
 \item The $\gamma$-fields $\gamma$ and $\widetilde\gamma$ are defined by 
 \begin{equation*}
 \begin{split}
  \gamma(\lambda)&:=\bigl(\Gamma_0\upharpoonright\ker(T-\lambda)\bigr)^{-1},\qquad \lambda\in\rho(A_0),\\
  \widetilde\gamma(\mu)&:=\bigl(\widetilde\Gamma_0\upharpoonright\ker(\widetilde T-\mu)\bigr)^{-1},\qquad \mu\in\rho(\widetilde A_0).
 \end{split}
 \end{equation*}
 \item The Weyl functions $M$ and $\widetilde M$ are defined by 
 \begin{equation*}
 \begin{split}
  M(\lambda)&:=\Gamma_1\bigl(\Gamma_0\upharpoonright\ker(T-\lambda)\bigr)^{-1}=\Gamma_1\gamma(\lambda),\qquad \lambda\in\rho(A_0),\\
  \widetilde M(\mu)&:=\widetilde\Gamma_1\bigl(\widetilde\Gamma_0\upharpoonright\ker(\widetilde T-\mu)\bigr)^{-1}=\widetilde\Gamma_1\widetilde\gamma(\mu),\qquad \mu\in\rho(\widetilde A_0).
 \end{split}
 \end{equation*}
\end{enumerate}
 \end{definition}

In the next proposition we collect some properties of the $\gamma$-fields and Weyl functions corresponding to a generalized boundary triple, see \cite[Proposition~3.3 and Proposition~3.4]{B25} for proofs.
 
\begin{proposition}\label{gam-m-prop}
 Let  $\{\cG,(\Gamma_0,\Gamma_1),(\widetilde\Gamma_0,\widetilde\Gamma_1)\}$ be a generalized boundary triple and assume 
 that $\rho(A_0)\not=\emptyset$ or, equivalently, $\rho(\widetilde A_0)\not=\emptyset$. 
Then the following holds for $\lambda\in\rho(A_0)$ and $\mu\in\rho(\widetilde A_0)$:
\begin{enumerate}[\upshape (i)]
 \item $\gamma(\lambda)$ and $\widetilde\gamma(\mu)$ are everywhere defined bounded operators from $\cG$ to $\cH$;
 \item the functions $\lambda\mapsto\gamma(\lambda)$ and $\mu\mapsto\widetilde\gamma(\mu)$ are holomorphic 
 on $\rho(A_0)$ and $\rho(\widetilde A_0)$, respectively, and one has
 \begin{equation*}
  \begin{split}
   \gamma(\lambda)&=\bigl(I+(\lambda-\nu)(A_0-\lambda)^{-1}\bigr)\gamma(\nu),\qquad \lambda,\nu\in\rho(A_0),\\
   \widetilde\gamma(\mu)&=\bigl(I+(\mu-\omega)(\widetilde A_0-\mu)^{-1}\bigr)\widetilde\gamma(\omega),\qquad \mu,\omega\in\rho(\widetilde A_0);
  \end{split}
\end{equation*}
 \item $\gamma(\lambda)^*$ and $\widetilde\gamma(\mu)^*$ are everywhere defined bounded operators from $\cH$ to $\cG$
 and  one has 
 \begin{equation*}
  \gamma(\lambda)^*=\widetilde\Gamma_1(\widetilde A_0-\myoverline\lambda)^{-1}\quad\text{and}\quad
  \widetilde\gamma(\mu)^*=\Gamma_1(A_0-\myoverline\mu)^{-1};
 \end{equation*}
 \item $M(\lambda)$ and $\widetilde M(\mu)$ are everywhere defined bounded operators in $\cG$, and for $f_\lambda\in\ker(T-\lambda)$ 
 and $g_\mu\in \ker(\widetilde T-\mu)$ one has
\begin{equation*}
 M(\lambda)\Gamma_0 f_\lambda =\Gamma_1 f_\lambda\quad\text{and}\quad
 \widetilde M(\mu)\widetilde\Gamma_0 g_\mu =\widetilde\Gamma_1 g_\mu;
\end{equation*}
\item $M(\lambda)=\widetilde M(\myoverline\lambda)^*$ and $\widetilde M(\mu)= M(\myoverline\mu)^*$ and one has
\begin{equation*}
\begin{split}
 M(\lambda)-\widetilde M(\mu)^*&=(\lambda-\myoverline\mu)\widetilde\gamma(\mu)^*\gamma(\lambda),\\
 M(\lambda)^*-\widetilde M(\mu)&=(\myoverline\lambda-\mu)\gamma(\lambda)^*\widetilde\gamma(\mu);
 \end{split}
\end{equation*}
\item
the functions $\lambda\mapsto M(\lambda)$ and $\mu\mapsto\widetilde M(\mu)$ are holomorphic on $\rho(A_0)$ and $\rho(\widetilde A_0)$, respectively, and for some fixed $\lambda_0,\mu_0\in\rho(A_0)\cap\rho(\widetilde A_0)$ one has
\begin{equation*}
\begin{split} 
 M(\lambda)&=\widetilde M(\lambda_0)^*+\widetilde\gamma(\lambda_0)^*(\lambda-\myoverline\lambda_0)\bigl(I+(\lambda-\lambda_0)(A_0-\lambda)^{-1}\bigr)\gamma(\lambda_0),\\
 \widetilde M(\mu)&=M(\mu_0)^*+\gamma(\mu_0)^*(\mu-\myoverline\mu_0)\bigl(I+(\mu-\mu_0)(\widetilde A_0-\mu)^{-1}\bigr)\widetilde\gamma(\mu_0).
\end{split} 
\end{equation*}
\end{enumerate}
\end{proposition}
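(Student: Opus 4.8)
The plan is to follow the classical reasoning from the symmetric case in \cite{DM95}, systematically translating between the two ``sides'' $\{T,A_0,\Gamma_0,\Gamma_1,\lambda\}$ and $\{\widetilde T,\widetilde A_0,\widetilde\Gamma_0,\widetilde\Gamma_1,\myoverline{\lambda}\}$, and to reduce each boundedness claim to the closed graph theorem by computing the relevant adjoint explicitly from the Green identity. Throughout one uses the equivalence $\lambda\in\rho(A_0)\Leftrightarrow\myoverline{\lambda}\in\rho(\widetilde A_0)$ and the direct sum decompositions $\dom T=\ker\Gamma_0\,\dot +\,\ker(T-\lambda)$ and $\dom\widetilde T=\ker\widetilde\Gamma_0\,\dot +\,\ker(\widetilde T-\mu)$ recalled above. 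As a first step, for $\lambda\in\rho(A_0)$ the map $\Gamma_0\upharpoonright\ker(T-\lambda)$ is injective, its kernel being $\ker\Gamma_0\cap\ker(T-\lambda)=\ker(A_0-\lambda)=\{0\}$, and it is surjective onto $\cG$: given $\varphi\in\cG$ pick $f\in\dom T$ with $\Gamma_0 f=\varphi$, decompose $f=f_0+f_\lambda$ along the displayed decomposition, and observe $\Gamma_0 f_\lambda=\Gamma_0 f=\varphi$. Hence $\gamma(\lambda)$ is everywhere defined on $\cG$ with $\ran\gamma(\lambda)=\ker(T-\lambda)$, and analogously $\widetilde\gamma(\mu)$ is everywhere defined for $\mu\in\rho(\widetilde A_0)$.

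Next I would identify $\gamma(\lambda)^*$. For $\varphi\in\cG$ and $h\in\cH$ write $h=(\widetilde A_0-\myoverline{\lambda})k$ with $k\in\dom\widetilde A_0=\ker\widetilde\Gamma_0$, set $f_\lambda=\gamma(\lambda)\varphi\in\ker(T-\lambda)$, and apply the Green identity to $f_\lambda$ and $k$. Using $Tf_\lambda=\lambda f_\lambda$, $\widetilde Tk=\widetilde A_0 k=h+\myoverline{\lambda} k$ and $\widetilde\Gamma_0 k=0$, it collapses to $(\gamma(\lambda)\varphi,h)_\cH=(\varphi,\widetilde\Gamma_1(\widetilde A_0-\myoverline{\lambda})^{-1}h)_\cG$; as $h$ ranges over $\cH$ this shows $\gamma(\lambda)^*$ is everywhere defined and equals $\widetilde\Gamma_1(\widetilde A_0-\myoverline{\lambda})^{-1}$. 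Being an adjoint, $\gamma(\lambda)^*$ is closed, hence bounded by the closed graph theorem, and therefore $\gamma(\lambda)$ --- everywhere defined with everywhere defined bounded adjoint --- is itself bounded. Swapping the two sides yields $\widetilde\gamma(\mu)^*=\Gamma_1(A_0-\myoverline{\mu})^{-1}$ and the boundedness of $\widetilde\gamma(\mu)$; this settles (i) and (iii). For (ii) I would verify the resolvent identity directly: for $\varphi\in\cG$ and $\lambda,\nu\in\rho(A_0)$ put $f_\nu=\gamma(\nu)\varphi$ and $f_\lambda:=\bigl(I+(\lambda-\nu)(A_0-\lambda)^{-1}\bigr)f_\nu$; since $(A_0-\lambda)^{-1}f_\nu\in\dom A_0=\ker\Gamma_0$ one checks $(T-\lambda)f_\lambda=0$ and $\Gamma_0 f_\lambda=\Gamma_0 f_\nu=\varphi$, so $f_\lambda=\gamma(\lambda)\varphi$. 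Holomorphy of $\lambda\mapsto\gamma(\lambda)$ in the operator norm then follows from that of $\lambda\mapsto(A_0-\lambda)^{-1}$ and the boundedness of $\gamma(\nu)$, and the statements for $\widetilde\gamma$ are symmetric.

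For (iv) and (v), the operator $M(\lambda)=\Gamma_1\gamma(\lambda)$ is everywhere defined on $\cG$ because $\ran\gamma(\lambda)=\ker(T-\lambda)\subseteq\dom T=\dom\Gamma_1$, and $M(\lambda)\Gamma_0 f_\lambda=\Gamma_1 f_\lambda$ for $f_\lambda\in\ker(T-\lambda)$ is immediate from $\gamma(\lambda)\Gamma_0 f_\lambda=f_\lambda$; similarly for $\widetilde M$. To get the remaining assertions, apply the Green identity to $f_\lambda=\gamma(\lambda)\varphi\in\ker(T-\lambda)$ and $g_\mu=\widetilde\gamma(\mu)\psi\in\ker(\widetilde T-\mu)$: the Hilbert space side equals $(\lambda-\myoverline{\mu})(\gamma(\lambda)\varphi,\widetilde\gamma(\mu)\psi)_\cH=(\lambda-\myoverline{\mu})(\widetilde\gamma(\mu)^*\gamma(\lambda)\varphi,\psi)_\cG$, while the boundary side equals $(M(\lambda)\varphi,\psi)_\cG-(\varphi,\widetilde M(\mu)\psi)_\cG$; comparing them for all $\varphi,\psi\in\cG$ shows that $\widetilde M(\mu)$ has an everywhere defined adjoint and $M(\lambda)-\widetilde M(\mu)^*=(\lambda-\myoverline{\mu})\widetilde\gamma(\mu)^*\gamma(\lambda)$. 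Choosing $\mu=\myoverline{\lambda}$ makes the right-hand side vanish, so $M(\lambda)=\widetilde M(\myoverline{\lambda})^*$; being an adjoint and everywhere defined, $M(\lambda)$ is closed, hence bounded. The relation $\widetilde M(\mu)=M(\myoverline{\mu})^*$ and the second difference formula $M(\lambda)^*-\widetilde M(\mu)=(\myoverline{\lambda}-\mu)\gamma(\lambda)^*\widetilde\gamma(\mu)$ then follow by symmetry and by passing to adjoints.

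Finally, for (vi) I would substitute the resolvent identity for $\gamma(\lambda)$ from (ii) into the difference formula of (v) with $\mu=\lambda_0$, obtaining $M(\lambda)=\widetilde M(\lambda_0)^*+\widetilde\gamma(\lambda_0)^*(\lambda-\myoverline{\lambda_0})\bigl(I+(\lambda-\lambda_0)(A_0-\lambda)^{-1}\bigr)\gamma(\lambda_0)$; holomorphy of $\lambda\mapsto M(\lambda)$ on $\rho(A_0)$ is then read off from holomorphy of $\lambda\mapsto(A_0-\lambda)^{-1}$, and the representation of $\widetilde M$ is symmetric. The only genuinely delicate point in the whole argument is the boundedness of $\gamma(\lambda)$ and $M(\lambda)$: since the boundary maps $\Gamma_0,\Gamma_1$ are a priori merely closable with respect to the graph norm of $T$, this cannot be read off from the definitions and must be routed through the explicit identifications $\gamma(\lambda)^*=\widetilde\Gamma_1(\widetilde A_0-\myoverline{\lambda})^{-1}$ and $M(\lambda)=\widetilde M(\myoverline{\lambda})^*$, whose right-hand sides are automatically closed and everywhere defined.
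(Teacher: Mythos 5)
Your proposal is correct and complete: each step (injectivity/surjectivity of $\Gamma_0\upharpoonright\ker(T-\lambda)$ via the decomposition $\dom T=\ker\Gamma_0\,\dot+\,\ker(T-\lambda)$, the identification $\gamma(\lambda)^*=\widetilde\Gamma_1(\widetilde A_0-\myoverline\lambda)^{-1}$ from the Green identity, boundedness routed through adjoints and the closed graph theorem, and $M(\lambda)=\widetilde M(\myoverline\lambda)^*$ obtained by setting $\mu=\myoverline\lambda$ in the difference formula) checks out. The paper does not prove this proposition itself but refers to \cite[Propositions 3.3 and 3.4]{B25}; your argument is precisely the standard Derkach--Malamud-type reasoning that underlies those cited proofs, and you rightly identify the only delicate point, namely that boundedness of $\gamma(\lambda)$ and $M(\lambda)$ cannot be read off from the (merely closable) boundary maps and must come from the explicit adjoint identifications.
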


\subsection{Closed extensions and their resolvents}
Next we introduce two families of operators in $\cH$ as restrictions of $T$ and $\widetilde T$ via abstract boundary conditions in $\cG$. 
 Let again
 $\{\cG,(\Gamma_0,\Gamma_1),(\widetilde\Gamma_0,\widetilde\Gamma_1)\}$ be a generalized boundary triple for the adjoint pair $\{S,\widetilde S\}$,
 where $\{T, \widetilde T\}$ is a core of $\{S^*,\widetilde S^*\}$.
 For linear operators $B$ and $\widetilde B$ in $\cG$ we define the restrictions $A_B$ of $T$ and $\widetilde A_{\widetilde B}$ of $\widetilde T$ via abstract boundary conditions in $\cG$ by 
 \begin{equation}\label{a123}
 \begin{split}
  A_B f&:= Tf,\qquad \dom A_B:=\bigl\{f\in\dom T: B\Gamma_1 f = \Gamma_0 f\bigr\},\\
  \widetilde A_{\widetilde B} g&:= \widetilde Tg,\qquad \dom \widetilde A_{\widetilde B}:=\bigl\{g\in\dom \widetilde T: \widetilde B\widetilde\Gamma_1 g 
  = \widetilde \Gamma_0 g\bigr\}.
 \end{split}
 \end{equation}
It follows from Green's identity that for a densely defined operator $B$ one has 
\begin{equation*}
 A_B\subset (\widetilde A_{B^*})^* \quad\text{and}\quad \widetilde A_{B^*}\subset (A_B)^*.
\end{equation*}
In the next lemma, we formulate an abstract version of the Birman-Schwinger principle to characterize eigenvalues of the extensions $A_B$ and $\widetilde A_{\widetilde B}$ via the Weyl functions; cf. \cite[Corollary~4.3]{B25} for a proof.

\begin{lemma}\label{bslem}
Let  $\{\cG,(\Gamma_0,\Gamma_1),(\widetilde\Gamma_0,\widetilde\Gamma_1)\}$ be a generalized boundary triple, assume 
 that $\rho(A_0)\not=\emptyset$ or, equivalently, $\rho(\widetilde A_0)\not=\emptyset$, and let  $M$ and $\widetilde M$ be the associated Weyl functions.
Then the following assertions hold for the operators $A_B$ and $\widetilde A_{\widetilde B}$ in \eqref{a123}, and all $\lambda\in\rho(A_0)$
and $\mu\in\rho(\widetilde A_0)$:
\begin{enumerate}[\upshape (i)]
\item $\lambda\in\sigma_p(A_{B})$ if and only if $\ker(I-BM(\lambda))\not=\{0\}$, and in this case 
 \begin{equation*}
  \ker(A_B-\lambda)=\gamma(\lambda)\ker(I-BM(\lambda)).
 \end{equation*}
\item $\mu\in\sigma_p(\widetilde A_{\widetilde B})$ if and only if 
 $\ker(I-\widetilde B\widetilde M(\mu))\not=\{0\}$, and in this case 
 \begin{equation*}
  \ker(\widetilde A_{\widetilde B}-\mu)=\widetilde\gamma(\mu)\ker(I-\widetilde B\widetilde M(\mu)).
 \end{equation*}
\end{enumerate}
\end{lemma}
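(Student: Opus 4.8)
The plan is to establish part (i); part (ii) then follows by the symmetric argument with the roles of $T,\Gamma_j,B,M$ replaced by $\widetilde T,\widetilde\Gamma_j,\widetilde B,\widetilde M$, so I would only write out (i) in detail and remark that (ii) is analogous. Fix $\lambda\in\rho(A_0)$. The starting point is the direct sum decomposition $\dom T=\dom A_0\,\dot+\,\ker(T-\lambda)$ recalled before the definition of the $\gamma$-field, together with the fact that $A_0-\lambda$ is a bijection from $\dom A_0$ onto $\cH$. If $f\in\dom A_B$ satisfies $(A_B-\lambda)f=0$, then $f\in\ker(T-\lambda)$, because decomposing $f=f_0+f_\lambda$ with $f_0\in\dom A_0$ and $f_\lambda\in\ker(T-\lambda)$ gives $(T-\lambda)f=(A_0-\lambda)f_0=0$, hence $f_0=0$. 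Thus every eigenfunction lies in $\ker(T-\lambda)$, and conversely any $f\in\ker(T-\lambda)$ with $f\in\dom A_B$ is automatically an eigenfunction at $\lambda$. So the task reduces to describing $\ker(T-\lambda)\cap\dom A_B$.

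The key identification uses the $\gamma$-field: by definition $\gamma(\lambda)=\bigl(\Gamma_0\upharpoonright\ker(T-\lambda)\bigr)^{-1}$, so $\Gamma_0$ restricts to a bijection from $\ker(T-\lambda)$ onto $\cG$ with inverse $\gamma(\lambda)$. Hence every $f_\lambda\in\ker(T-\lambda)$ is of the form $f_\lambda=\gamma(\lambda)\varphi$ for the unique $\varphi:=\Gamma_0 f_\lambda\in\cG$, and by Proposition~\ref{gam-m-prop}(iv) one has $\Gamma_1 f_\lambda=M(\lambda)\Gamma_0 f_\lambda=M(\lambda)\varphi$. Now $f_\lambda\in\dom A_B$ means precisely $B\Gamma_1 f_\lambda=\Gamma_0 f_\lambda$, i.e. $BM(\lambda)\varphi=\varphi$, i.e. $(I-BM(\lambda))\varphi=0$. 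Therefore
\begin{equation*}
\ker(A_B-\lambda)=\ker(T-\lambda)\cap\dom A_B=\bigl\{\gamma(\lambda)\varphi:\varphi\in\ker(I-BM(\lambda))\bigr\}=\gamma(\lambda)\ker(I-BM(\lambda)).
\end{equation*}
In particular $\lambda\in\sigma_p(A_B)$, i.e. $\ker(A_B-\lambda)\ne\{0\}$, holds if and only if $\ker(I-BM(\lambda))\ne\{0\}$, since $\gamma(\lambda)$ is injective (it has a left inverse $\Gamma_0$ on $\ker(T-\lambda)$); this gives the equivalence and the displayed formula in (i).

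I do not expect a genuine obstacle here: the argument is a direct bookkeeping exercise once the decomposition $\dom T=\dom A_0\,\dot+\,\ker(T-\lambda)$ and the relations $\Gamma_0\gamma(\lambda)=I$, $\Gamma_1\gamma(\lambda)=M(\lambda)$ from Proposition~\ref{gam-m-prop} are in hand. The one point deserving a line of care is the injectivity of $\gamma(\lambda)$, so that the correspondence $\varphi\mapsto\gamma(\lambda)\varphi$ between $\ker(I-BM(\lambda))$ and $\ker(A_B-\lambda)$ is a genuine bijection and not merely surjective; this follows immediately because $\Gamma_0\gamma(\lambda)=I_\cG$. It is also worth remarking explicitly that $B$ is only assumed to be a linear operator in $\cG$, so the boundary condition $B\Gamma_1 f=\Gamma_0 f$ tacitly includes the requirement $\Gamma_1 f\in\dom B$; but for $f_\lambda=\gamma(\lambda)\varphi\in\ker(T-\lambda)$ the condition $\varphi\in\ker(I-BM(\lambda))$ already forces $M(\lambda)\varphi\in\dom B$, so no further hypothesis is needed and the equivalence is clean. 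Finally, (ii) is obtained verbatim by exchanging tildes, using $\widetilde\gamma(\mu)$, $\widetilde M(\mu)$, $\widetilde\Gamma_0$, $\widetilde\Gamma_1$ and the decomposition of $\dom\widetilde T$ for $\mu\in\rho(\widetilde A_0)$.
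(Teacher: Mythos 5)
Your argument is correct and is essentially the standard proof that the paper delegates to \cite[Corollary~4.3]{B25}: eigenfunctions at $\lambda\in\rho(A_0)$ lie in $\ker(T-\lambda)$, which $\Gamma_0$ maps bijectively onto $\cG$ with inverse $\gamma(\lambda)$, and the boundary condition $B\Gamma_1 f=\Gamma_0 f$ translates via $\Gamma_1\gamma(\lambda)=M(\lambda)$ into $(I-BM(\lambda))\varphi=0$. Your attention to the injectivity of $\gamma(\lambda)$ and to the implicit domain condition $\Gamma_1 f\in\dom B$ is exactly the care needed, so nothing is missing.
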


In the next theorem we impose abstract conditions on $f,g\in\cH$, the $\gamma$-fields, Weyl functions, and parameters $B$ and $\widetilde B$  such that 
a Krein-type formula for the inverses of $A_{B}-\lambda$ and $\widetilde A_{\widetilde B}-\mu$ applied to $f$ and $g$, respectively, becomes meaningful; cf.
\cite[Theorem 4.4]{B25}.
These conditions will be made more explicit in Theorem~\ref{bssatz}.

\begin{theorem}\label{ojathm}
 Let  $\{\cG,(\Gamma_0,\Gamma_1),(\widetilde\Gamma_0,\widetilde\Gamma_1)\}$ be a generalized boundary triple and assume 
 that $\rho(A_0)\not=\emptyset$ or, equivalently, $\rho(\widetilde A_0)\not=\emptyset$. 
Let $\gamma,\widetilde\gamma$ and $M,\widetilde M$ be the associated $\gamma$-fields and Weyl functions, respectively.
Then the following assertions hold for the operators $A_B$ and $\widetilde A_{\widetilde B}$ in \eqref{a123}, and all $\lambda\in\rho(A_0)$
and $\mu\in\rho(\widetilde A_0)$:
\begin{enumerate}[\upshape (i)]
\item If $\lambda\not\in\sigma_p(A_B)$ and $f\in\cH$ is such that $\widetilde\gamma(\myoverline\lambda)^*f\in\dom B$ and $B\widetilde\gamma(\myoverline\lambda)^*f\in\ran(I-BM(\lambda))$,
then $f\in\ran(A_B-\lambda)$ and 
\begin{equation*}
 (A_B-\lambda)^{-1}f=(A_0-\lambda)^{-1}f+\gamma(\lambda)\bigl(I-BM(\lambda)\bigr)^{-1}B\widetilde\gamma(\myoverline\lambda)^*f.
\end{equation*}
\item  If $\mu\not\in\sigma_p(\widetilde A_{\widetilde B})$ and $g\in\cH$ is such that $\gamma(\myoverline\mu)^*g\in\dom \widetilde B$ and $\widetilde B\gamma(\myoverline\mu)^*g\in\ran(I-\widetilde B\widetilde M(\mu))$,
then $g\in\ran(\widetilde A_{\widetilde B}-\mu)$ and 
\begin{equation*}
 (\widetilde A_{\widetilde B}-\mu)^{-1}g=(\widetilde A_0-\mu)^{-1}g+\widetilde\gamma(\mu)\bigl(I-\widetilde B\widetilde M(\mu)\bigr)^{-1}\widetilde B\gamma(\myoverline\mu)^*g.
\end{equation*}
\end{enumerate}
\end{theorem}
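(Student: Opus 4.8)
The plan is to prove part~(i); part~(ii) then follows by the symmetric roles of $\{S,\widetilde S,T,\Gamma_0,\Gamma_1\}$ and $\{\widetilde S,S,\widetilde T,\widetilde\Gamma_0,\widetilde\Gamma_1\}$ together with the relations $M(\lambda)=\widetilde M(\myoverline\lambda)^*$ and $\gamma(\lambda)^*=\widetilde\Gamma_1(\widetilde A_0-\myoverline\lambda)^{-1}$ from Proposition~\ref{gam-m-prop}. So fix $\lambda\in\rho(A_0)$ with $\lambda\notin\sigma_p(A_B)$ and $f\in\cH$ satisfying $\widetilde\gamma(\myoverline\lambda)^*f\in\dom B$ and $B\widetilde\gamma(\myoverline\lambda)^*f\in\ran(I-BM(\lambda))$. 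Set
\begin{equation*}
 u:=(A_0-\lambda)^{-1}f+\gamma(\lambda)\bigl(I-BM(\lambda)\bigr)^{-1}B\widetilde\gamma(\myoverline\lambda)^*f,
\end{equation*}
which is a well-defined element of $\cH$: the first summand lies in $\dom A_0\subset\dom T$, and by Proposition~\ref{gam-m-prop}(i) the operator $\gamma(\lambda)$ is bounded on $\cG$, applied to a vector in the (nonempty by hypothesis) preimage $(I-BM(\lambda))^{-1}B\widetilde\gamma(\myoverline\lambda)^*f$. Note the preimage is a genuine single vector only if $I-BM(\lambda)$ is injective, but $\lambda\notin\sigma_p(A_B)$ gives exactly $\ker(I-BM(\lambda))=\{0\}$ by Lemma~\ref{bslem}(i), so the expression is unambiguous. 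Write $h:=(I-BM(\lambda))^{-1}B\widetilde\gamma(\myoverline\lambda)^*f\in\cG$.

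First I would check $u\in\dom A_B$ and $(A_B-\lambda)u=f$. Since $\ran\gamma(\lambda)=\ker(T-\lambda)\subset\dom T$ and $\gamma(\lambda)h\in\ker(T-\lambda)$, we get $(T-\lambda)u=(T-\lambda)(A_0-\lambda)^{-1}f=f$, using $(A_0-\lambda)^{-1}f\in\ker\Gamma_0$ and $T\upharpoonright\ker\Gamma_0=A_0$. For the boundary condition, compute $\Gamma_0 u$ and $\Gamma_1 u$. Because $(A_0-\lambda)^{-1}f\in\ker\Gamma_0$, we have $\Gamma_0 u=\Gamma_0\gamma(\lambda)h=h$ by the definition of $\gamma(\lambda)$ as the inverse of $\Gamma_0\upharpoonright\ker(T-\lambda)$. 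For $\Gamma_1 u$, use $\Gamma_1\gamma(\lambda)=M(\lambda)$ by definition of the Weyl function, and $\Gamma_1(A_0-\lambda)^{-1}f=\widetilde\gamma(\myoverline\lambda)^*f$ by Proposition~\ref{gam-m-prop}(iii); hence $\Gamma_1 u=\widetilde\gamma(\myoverline\lambda)^*f+M(\lambda)h$. It remains to verify $B\Gamma_1 u=\Gamma_0 u$, i.e.\ $B\bigl(\widetilde\gamma(\myoverline\lambda)^*f+M(\lambda)h\bigr)=h$. This needs $\widetilde\gamma(\myoverline\lambda)^*f+M(\lambda)h\in\dom B$: the first term is in $\dom B$ by hypothesis, and since $M(\lambda)h\in\ran M(\lambda)$ and $h=(I-BM(\lambda))h+BM(\lambda)h$ shows $M(\lambda)h$ lies in the domain on which $B$ acts in the definition of $h$, we can rearrange: from $h=(I-BM(\lambda))^{-1}B\widetilde\gamma(\myoverline\lambda)^*f$ we get $(I-BM(\lambda))h=B\widetilde\gamma(\myoverline\lambda)^*f$, i.e.\ $h-BM(\lambda)h=B\widetilde\gamma(\myoverline\lambda)^*f$, which by linearity of $B$ (additivity on its domain) rearranges to $B\bigl(\widetilde\gamma(\myoverline\lambda)^*f+M(\lambda)h\bigr)=h=\Gamma_0 u$, as required. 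Thus $u\in\dom A_B$ and $(A_B-\lambda)u=f$, so $f\in\ran(A_B-\lambda)$ and $(A_B-\lambda)^{-1}f=u$.

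The main obstacle is bookkeeping the \emph{domains of the unbounded parameter} $B$: the identity $B(x+y)=Bx+By$ is only available when $x,y,x+y\in\dom B$, so one must be careful that $\widetilde\gamma(\myoverline\lambda)^*f$, $M(\lambda)h$, and their sum all lie in $\dom B$ before manipulating $(I-BM(\lambda))h=B\widetilde\gamma(\myoverline\lambda)^*f$. The hypothesis $B\widetilde\gamma(\myoverline\lambda)^*f\in\ran(I-BM(\lambda))$ is precisely what lets us choose $h$ with $BM(\lambda)h$ making sense, so that $M(\lambda)h\in\dom B$; combined with $\widetilde\gamma(\myoverline\lambda)^*f\in\dom B$ the sum is then in $\dom B$ and the rearrangement is legitimate. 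A secondary subtlety is the well-definedness of $(I-BM(\lambda))^{-1}$ as a single-valued map on its range, which is exactly guaranteed by $\lambda\notin\sigma_p(A_B)$ via Lemma~\ref{bslem}(i); without this hypothesis $u$ would only be defined up to $\gamma(\lambda)\ker(I-BM(\lambda))=\ker(A_B-\lambda)$, so the resolvent identity would fail to pin down a unique value. Once part~(i) is established, part~(ii) is obtained verbatim after interchanging the two halves of the boundary triple and invoking the duality relations of Proposition~\ref{gam-m-prop}(iii),(v).
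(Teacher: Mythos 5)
Your proof of (i) is correct: defining $h=(I-BM(\lambda))^{-1}B\widetilde\gamma(\myoverline\lambda)^*f$ (single-valued by Lemma~\ref{bslem}~(i)), setting $u=(A_0-\lambda)^{-1}f+\gamma(\lambda)h$, and verifying $(T-\lambda)u=f$, $\Gamma_0u=h$, $\Gamma_1u=\widetilde\gamma(\myoverline\lambda)^*f+M(\lambda)h$ and $B\Gamma_1u=\Gamma_0u$ is exactly the standard direct verification, and the domain bookkeeping for the unbounded parameter $B$ is handled properly; part (ii) indeed follows by the symmetry of the triple. The paper itself does not reprint a proof but refers to \cite[Theorem 4.4]{B25}, and your argument is essentially that proof.
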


As a corollary of Theorem~\ref{ojathm} we formulate the following result; cf. \cite[Theorem~4.6]{B25}.

\begin{corollary}
Let the assumptions be as in Theorem~\ref{ojathm}  and let
 $B$ be a densely defined operator in $\cG$ such that for some $\lambda\in\rho(A_0)$ one has 
 \begin{equation*}
  \widetilde\gamma(\myoverline\lambda)^*f\in\dom B\quad\text{and}\quad B\widetilde\gamma(\myoverline\lambda)^*f\in\ran(I-BM(\lambda))\quad\text{for all}\quad f\in\cH,
 \end{equation*}
and for some $\mu\in\rho(\widetilde A_0)$ one has 
 \begin{equation*}\gamma(\myoverline\mu)^*g\in\dom B^*\quad\text{and}\quad B^*\gamma(\myoverline\mu)^*g\in\ran(I-B^*\widetilde M(\mu))
   \quad\text{for all}\quad g\in\cH.
 \end{equation*}
Then  $A_{B}$  in \eqref{a123}
is a closed operator with a nonempty resolvent set and one has  $A_{B} = (\widetilde A_{B^*})^*$.
\end{corollary}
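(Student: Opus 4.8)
The plan is to deduce everything from Theorem~\ref{ojathm} together with the inclusions $A_B\subset(\widetilde A_{B^*})^*$ and $\widetilde A_{B^*}\subset(A_B)^*$ recorded before Lemma~\ref{bslem}. In outline: first I would show that the two hypotheses force $A_B-\lambda$ and $\widetilde A_{B^*}-\mu$ to be surjective; then use the adjoint inclusions to turn surjectivity into injectivity of the adjoint operators at the conjugate points; then combine surjectivity and injectivity into the operator identity $A_B=(\widetilde A_{B^*})^*$; and finally invoke the closed graph theorem to put $\lambda$ into $\rho(A_B)$.

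For the surjectivity I would re-examine the proof of Theorem~\ref{ojathm}(i): the hypothesis $\lambda\notin\sigma_p(A_B)$ there serves only to make the resolvent formula single-valued, whereas the inclusion $f\in\ran(A_B-\lambda)$ already follows from $\widetilde\gamma(\myoverline\lambda)^*f\in\dom B$ and $B\widetilde\gamma(\myoverline\lambda)^*f\in\ran(I-BM(\lambda))$ alone; indeed, if $\psi$ is any solution of $(I-BM(\lambda))\psi=B\widetilde\gamma(\myoverline\lambda)^*f$, then $u:=(A_0-\lambda)^{-1}f+\gamma(\lambda)\psi$ satisfies $\Gamma_0u=\psi=B\Gamma_1u$ and $(T-\lambda)u=f$ (using $\Gamma_1(A_0-\lambda)^{-1}=\widetilde\gamma(\myoverline\lambda)^*$ and $\Gamma_1\gamma(\lambda)=M(\lambda)$), so $u\in\dom A_B$ and $(A_B-\lambda)u=f$. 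Hence the first hypothesis gives $\ran(A_B-\lambda)=\cH$, and running the same reasoning for Theorem~\ref{ojathm}(ii) with $\widetilde B=B^*$, the second hypothesis gives $\ran(\widetilde A_{B^*}-\mu)=\cH$. Next, $A_B\subset(\widetilde A_{B^*})^*$ yields, for $g\in\ker(\widetilde A_{B^*}-\myoverline\lambda)$ and any $h\in\dom A_B$, the identity $(g,(A_B-\lambda)h)=((\widetilde A_{B^*}-\myoverline\lambda)g,h)=0$, so surjectivity of $A_B-\lambda$ forces $g=0$; thus $\widetilde A_{B^*}-\myoverline\lambda$ is injective, and symmetrically $A_B-\myoverline\mu$ is injective.

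Granting in addition that $(\widetilde A_{B^*})^*-\lambda$ is injective---equivalently, that $\ran(\widetilde A_{B^*}-\myoverline\lambda)$ is dense in $\cH$---the operator identity follows by a soft argument: $A_B-\lambda\subset(\widetilde A_{B^*})^*-\lambda$, the smaller operator is surjective and the larger is injective, so for $x\in\dom(\widetilde A_{B^*})^*$ there is $z\in\dom A_B$ with $(A_B-\lambda)z=((\widetilde A_{B^*})^*-\lambda)x$, and injectivity gives $x=z\in\dom A_B$. Hence $A_B=(\widetilde A_{B^*})^*$ is closed and $A_B-\lambda$ is bijective, so $(A_B-\lambda)^{-1}$ is bounded by the closed graph theorem, $\lambda\in\rho(A_B)$, and $\rho(A_B)\neq\emptyset$. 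I expect the main obstacle to be precisely the injectivity of $(\widetilde A_{B^*})^*-\lambda$ used above: it has to come from the second hypothesis via the Birman--Schwinger principle (Lemma~\ref{bslem}) and the Weyl-function symmetry $\widetilde M(\myoverline\lambda)=M(\lambda)^*$ from Proposition~\ref{gam-m-prop}(v); this is immediate when the second hypothesis holds at $\mu=\myoverline\lambda$, in which case it directly gives $\ran(\widetilde A_{B^*}-\myoverline\lambda)=\cH$, and it is the step where the two base points $\lambda$ and $\mu$ must be coordinated.
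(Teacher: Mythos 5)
Your surjectivity step is sound and is indeed the intended use of the hypotheses: the range statement hidden in Theorem~\ref{ojathm} does not require $\lambda\notin\sigma_p(A_B)$, and your element $u=(A_0-\lambda)^{-1}f+\gamma(\lambda)\psi$ (note that $M(\lambda)\psi\in\dom B$ is automatic from $\psi\in\dom(I-BM(\lambda))$, and $\widetilde\gamma(\myoverline{\lambda})^*=\Gamma_1(A_0-\lambda)^{-1}$ by Proposition~\ref{gam-m-prop}\,(iii)) does give $\ran(A_B-\lambda)=\cH$, and symmetrically $\ran(\widetilde A_{B^*}-\mu)=\cH$; the deduction that $\widetilde A_{B^*}-\myoverline{\lambda}$ and $A_B-\myoverline{\mu}$ are injective is also correct. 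Since the paper itself offers no written proof of this corollary but refers to \cite[Theorem~4.6]{B25}, I can only judge your argument as a self-contained one, and as such it is incomplete: you prove the statement only under the extra assumption $\mu=\myoverline{\lambda}$, whereas the corollary allows the two points $\lambda\in\rho(A_0)$ and $\mu\in\rho(\widetilde A_0)$ to be completely independent. The step you yourself flag as ``granting in addition'' --- injectivity of $(\widetilde A_{B^*})^*-\lambda$, equivalently density of $\ran(\widetilde A_{B^*}-\myoverline{\lambda})$ --- is precisely the content that has to be supplied, and nothing in your text supplies it when $\mu\neq\myoverline{\lambda}$.

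This is a genuine gap, not a routine detail, because the soft scheme you rely on (``$A\subset W^*$, $A-\lambda$ surjective, $W-\mu$ surjective, hence $A=W^*$ is closed with nonempty resolvent set'') is false at this level of generality. For example, in $\ell^2(\dN)$ let $W$ be the unilateral right shift, so $W^*=L$ is the left shift, and let $A:=L\upharpoonright\cD$, where $\cD$ is the kernel of an everywhere defined unbounded linear functional $\phi$ with $\phi(e_1)\neq 0$; then $\cD$ is dense and proper, $A\subset W^*$, $A$ is surjective (solve $Lx=y$ along the line $Ry+te_1$ and adjust $t$), and $W-2$ is surjective, yet $A$ is neither closed nor equal to $W^*$, and $\rho(A)=\emptyset$. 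So bridging the two spectral points cannot be done by surjectivity plus the adjoint inclusions and the closed graph theorem alone; it must exploit the coupling of $A_B$ and $\widetilde A_{B^*}$ through the same parameter $B$ and the Weyl function identities (e.g.\ $\widetilde M(\mu)=M(\myoverline{\mu})^*$ from Proposition~\ref{gam-m-prop}\,(v), Lemma~\ref{bslem} at the point $\myoverline{\mu}\in\rho(A_0)$), which is what the cited proof in \cite[Theorem~4.6]{B25} provides and what your proposal sketches only as an expectation. To make your argument a proof of the corollary as stated, you would need to carry out exactly this coordination step rather than postulate it.
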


Now we recall a more direct and explicit criterion in terms of the Weyl function and the boundary operator $B$ such that $A_{B}$ becomes 
a closed operator with a nonempty resolvent set; cf. \cite[Corollary 4.9]{B25}.

\begin{theorem}\label{bssatz}
 Let  $\{\cG,(\Gamma_0,\Gamma_1),(\widetilde\Gamma_0,\widetilde\Gamma_1)\}$ be a generalized boundary triple and assume 
 that $\rho(A_0)\not=\emptyset$ or, equivalently, $\rho(\widetilde A_0)\not=\emptyset$. 
Let $\gamma,\widetilde\gamma$ and $M,\widetilde M$ be the associated $\gamma$-fields and Weyl functions, respectively. 

\begin{enumerate}[\upshape (i)]
\item
Assume that $B$ is a closable operator in $\cG$ such that $1\in\rho(B M(\lambda_0))$ for some $\lambda_0\in\rho(A_0)$ and $\ran(\Gamma_1\upharpoonright\ker\Gamma_0)\subset\dom B$.
Then  $A_{B}$  in \eqref{a123}
is a closed operator, $\lambda_0 \in \rho(A_B)$, and the Krein-type resolvent formula
\begin{equation*}
(A_{B}-\lambda)^{-1}=(A_0-\lambda)^{-1}+\gamma(\lambda)\bigl(I-BM(\lambda)\bigr)^{-1}B\widetilde\gamma(\myoverline{\lambda})^*
\end{equation*}
holds for all $\lambda\in\rho(A_0)\cap\rho(A_{B})$.

\item
Assume that $\widetilde B$ is a closable operator in $\cG$ such that $1\in\rho(\widetilde B \widetilde M(\mu_0))$ 
for some $\mu_0\in\rho(\widetilde A_0)$ and $\ran(\widetilde\Gamma_1\upharpoonright\ker\widetilde\Gamma_0)\subset\dom \widetilde B$.
Then  $\widetilde A_{\widetilde B}$  in \eqref{a123}
is a closed operator, $\mu_0 \in \rho(\widetilde A_{\widetilde B})$, and  the Krein-type resolvent formula
\begin{equation*}
 (\widetilde A_{\widetilde B}-\mu)^{-1}=(\widetilde A_0-\mu)^{-1}+\widetilde\gamma(\mu)\bigl(I-\widetilde B\widetilde M(\mu)\bigr)^{-1}\widetilde B\gamma(\myoverline\mu)^*
\end{equation*}
holds for all $\mu\in\rho(\widetilde A_0)\cap\rho(\widetilde A_{\widetilde B})$.
\end{enumerate}
\end{theorem}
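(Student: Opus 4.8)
The plan is to prove part (i) and then obtain part (ii) by applying (i) to the adjoint pair $\{\widetilde S, S\}$ with the roles of the boundary maps interchanged, since the hypotheses on $\widetilde B$, $\widetilde M$, and $\widetilde\Gamma_1\upharpoonright\ker\widetilde\Gamma_0$ are exactly the mirror of those in (i).

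For part (i), the first step is to promote the resolvent-type hypotheses of Theorem~\ref{ojathm}~(i) from "for some $f$" to "for all $f\in\cH$". The condition $\ran(\Gamma_1\upharpoonright\ker\Gamma_0)\subset\dom B$ is the key: by Proposition~\ref{gam-m-prop}~(iii), $\widetilde\gamma(\myoverline\lambda_0)^*f=\Gamma_1(A_0-\lambda_0)^{-1}f$ lies in $\ran(\Gamma_1\upharpoonright\ker\Gamma_0)$ for every $f\in\cH$, hence $\widetilde\gamma(\myoverline\lambda_0)^*f\in\dom B$ for all $f$. Since $1\in\rho(BM(\lambda_0))$ and $M(\lambda_0)$ is everywhere defined and bounded (Proposition~\ref{gam-m-prop}~(iv)), the operator $I-BM(\lambda_0)$ has a bounded everywhere defined inverse on $\cG$, so $\ran(I-BM(\lambda_0))=\cG$ and the second condition of Theorem~\ref{ojathm}~(i) is automatic. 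It remains to check $\lambda_0\notin\sigma_p(A_B)$: by the Birman-Schwinger principle in Lemma~\ref{bslem}~(i), $\lambda_0\in\sigma_p(A_B)$ would force $\ker(I-BM(\lambda_0))\neq\{0\}$, contradicting $1\in\rho(BM(\lambda_0))$. Therefore Theorem~\ref{ojathm}~(i) applies and yields $\ran(A_B-\lambda_0)=\cH$ together with the resolvent formula at $\lambda_0$; combined with injectivity of $A_B-\lambda_0$ this gives $\lambda_0\in\rho(A_B)$, provided we know $(A_B-\lambda_0)^{-1}$ is bounded.

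The next step is boundedness and closedness. From the explicit formula
\[
(A_B-\lambda_0)^{-1}=(A_0-\lambda_0)^{-1}+\gamma(\lambda_0)\bigl(I-BM(\lambda_0)\bigr)^{-1}B\widetilde\gamma(\myoverline\lambda_0)^*,
\]
each factor on the right is a bounded everywhere defined operator: $(A_0-\lambda_0)^{-1}$ since $\lambda_0\in\rho(A_0)$; $\gamma(\lambda_0)$ and $\widetilde\gamma(\myoverline\lambda_0)^*$ by Proposition~\ref{gam-m-prop}~(i),(iii); $(I-BM(\lambda_0))^{-1}$ by hypothesis; and the composition $B\widetilde\gamma(\myoverline\lambda_0)^*$ is everywhere defined by the range condition just used, and is closable (hence bounded by the closed graph theorem, being everywhere defined) because $B$ is closable and $\widetilde\gamma(\myoverline\lambda_0)^*$ is bounded. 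One should note the small subtlety that $B$ closable plus $B\widetilde\gamma(\myoverline\lambda_0)^*$ everywhere defined on $\cH$ gives closability of $B\widetilde\gamma(\myoverline\lambda_0)^*$, and an everywhere defined closable operator between Hilbert spaces is bounded; this is the mechanism by which the mere closability of $B$ (rather than boundedness) suffices. Hence $(A_B-\lambda_0)^{-1}$ is a bounded everywhere defined operator, so $\lambda_0\in\rho(A_B)$ and in particular $A_B$ is closed.

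Finally, for the resolvent formula at a general $\lambda\in\rho(A_0)\cap\rho(A_B)$: one route is to invoke Theorem~\ref{ojathm}~(i) again, this time needing $\widetilde\gamma(\myoverline\lambda)^*f\in\dom B$ for all $f$ — which again follows from the range condition via Proposition~\ref{gam-m-prop}~(iii), now at the parameter $\lambda$ — and $B\widetilde\gamma(\myoverline\lambda)^*f\in\ran(I-BM(\lambda))$; the latter holds because $\lambda\in\rho(A_B)$ forces, via Lemma~\ref{bslem}~(i), that $\ker(I-BM(\lambda))=\{0\}$, and one argues that $I-BM(\lambda)$ is in fact boundedly invertible — e.g. by combining $\lambda\in\rho(A_B)\cap\rho(A_0)$ with the identity $(A_B-\lambda)(A_0-\lambda)^{-1}$-type computations, or by the analytic-Fredholm/resolvent-identity argument that relates $(I-BM(\lambda))^{-1}$ to $(I-BM(\lambda_0))^{-1}$ along the connected resolvent set. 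I expect the main obstacle to be exactly this last point: justifying that the abstract inverse $(I-BM(\lambda))^{-1}$ appearing in the formula is a genuine bounded operator for all $\lambda\in\rho(A_0)\cap\rho(A_B)$, not merely at $\lambda_0$ — this requires either an operator-valued analyticity argument (holomorphy of $M$ from Proposition~\ref{gam-m-prop}~(vi) plus the resolvent identity for $BM$) or a direct verification that $\lambda\in\rho(A_B)$ propagates invertibility of $I-BM(\lambda)$ through the Krein formula itself. Once that is in hand, substituting back into Theorem~\ref{ojathm}~(i) closes the argument, and part (ii) follows by the symmetry noted at the outset.
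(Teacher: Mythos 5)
Your treatment of the point $\lambda_0$ coincides with the paper's argument: the range condition plus Proposition~\ref{gam-m-prop}~(iii) give $\widetilde\gamma(\myoverline\lambda_0)^*f\in\dom B$ for all $f$, closability of $B$ together with boundedness of $\widetilde\gamma(\myoverline\lambda_0)^*$ makes $B\widetilde\gamma(\myoverline\lambda_0)^*$ bounded by the closed graph theorem, $1\in\rho(BM(\lambda_0))$ excludes $\lambda_0\in\sigma_p(A_B)$ via Lemma~\ref{bslem} and yields $\ran(I-BM(\lambda_0))=\cG$, and Theorem~\ref{ojathm}~(i) then gives surjectivity, boundedness of $(A_B-\lambda_0)^{-1}$, closedness of $A_B$ and $\lambda_0\in\rho(A_B)$. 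Up to this point there is nothing to object to.

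The gap is the last step, the Krein formula for an arbitrary $\lambda\in\rho(A_0)\cap\rho(A_B)$, which you explicitly leave open and for which your proposed repairs point in the wrong direction. You try to establish that $I-BM(\lambda)$ is \emph{boundedly} invertible, either by analytic Fredholm theory or by propagating invertibility from $\lambda_0$; but no compactness of $BM(\lambda)$ is assumed in the abstract setting, $\rho(A_0)\cap\rho(A_B)$ need not be connected, and, more importantly, bounded invertibility is not what Theorem~\ref{ojathm}~(i) requires. What is needed is only that $B\widetilde\gamma(\myoverline\lambda)^*f\in\ran(I-BM(\lambda))$ for every $f\in\cH$, and this follows directly from $\lambda\in\rho(A_B)$ by the argument the paper cites from \cite[Theorem 4.7]{B25}: for $f\in\cH$ set $g:=(A_B-\lambda)^{-1}f-(A_0-\lambda)^{-1}f$, note $g\in\ker(T-\lambda)$, so $\Gamma_1 g=M(\lambda)\Gamma_0 g$; since $\Gamma_1(A_B-\lambda)^{-1}f\in\dom B$ by the boundary condition and $\widetilde\gamma(\myoverline\lambda)^*f=\Gamma_1(A_0-\lambda)^{-1}f\in\ran(\Gamma_1\upharpoonright\ker\Gamma_0)\subset\dom B$, one computes $BM(\lambda)\Gamma_0 g=B\Gamma_1 g=\Gamma_0 g-B\widetilde\gamma(\myoverline\lambda)^*f$, i.e. $(I-BM(\lambda))\Gamma_0 g=B\widetilde\gamma(\myoverline\lambda)^*f$. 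Together with $\ker(I-BM(\lambda))=\{0\}$ (Lemma~\ref{bslem}, since $\lambda\notin\sigma_p(A_B)$) this shows that $(I-BM(\lambda))^{-1}B\widetilde\gamma(\myoverline\lambda)^*$ is everywhere defined as an algebraic inverse, $\Gamma_0 g=(I-BM(\lambda))^{-1}B\widetilde\gamma(\myoverline\lambda)^*f$, and $g=\gamma(\lambda)\Gamma_0 g$ gives the resolvent formula; no boundedness of $(I-BM(\lambda))^{-1}$ is needed or claimed. Your derivation of part (ii) from part (i) by exchanging the roles of the two pairs of boundary maps is fine.
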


\begin{proof}[Sketch of the proof of \textup{(i)}]
First, the assumption $\ran(\Gamma_1\upharpoonright\ker\Gamma_0)\subset\dom B$ and Proposition~\ref{gam-m-prop}~(iii) 
imply that $B\widetilde\gamma(\myoverline{\lambda}_0)^*$ is an everywhere defined operator from $\cH$ into $\cG$. Moreover, as $B$ is closable and $\widetilde\gamma(\myoverline{\lambda}_0)^*$ is bounded, $B\widetilde\gamma(\myoverline{\lambda}_0)^*$ is closed and hence bounded by the closed graph theorem. Next, using the assumption $1\in\rho(BM(\lambda_0))$ we conclude 
$\lambda_0\not\in\sigma_p(A_B)$ from Lemma~\ref{bslem}~(i) and, moreover, 
we also have $\ran(I-BM(\lambda_0))=\cG$.  Now it follows from Theorem~\ref{ojathm}~(i)
that $\ran(A_B-\lambda_0)=\cH$ and 
\begin{equation*}
 (A_B-\lambda_0)^{-1}=(A_0-\lambda_0)^{-1}+\gamma(\lambda_0)\bigl(I-BM(\lambda_0)\bigr)^{-1}B\widetilde\gamma(\myoverline\lambda_0)^*.
\end{equation*}
Our assumptions ensure that the right hand side is a bounded operator in $\cH$ and hence $(A_B-\lambda_0)^{-1}$ is also bounded, which implies that $A_B$ is closed 
and $\lambda_0\in\rho(A_B)$. Finally, the same arguments as in  \cite[Proof of Theorem 4.7]{B25} show that the Krein-type resolvent formula
\begin{equation*}
(A_{B}-\lambda)^{-1}=(A_0-\lambda)^{-1}+\gamma(\lambda)\bigl(I-BM(\lambda)\bigr)^{-1}B\widetilde\gamma(\myoverline{\lambda})^*
\end{equation*}
holds for all $\lambda\in\rho(A_0)\cap\rho(A_{B})$.
\end{proof}

\section{Schr\"odinger operators with unbounded complex potentials on Lipschitz domains with compact boundaries}\label{diffopsec}

In this section we apply
the abstract notion of generalized 
boundary triples and their Weyl functions for adjoint pairs to Schr\"{o}dinger operators 
with complex potentials $V$ on Lipschitz domains. Here we treat the case of bounded Lipschitz domains $\Omega$ and exterior Lipschitz domains $\Omega$ (that is, complements of the closures of bounded Lipschitz domains) at the same time; cf. Assumption~\ref{assi}~(i) below and Appendix~\ref{appoja}. Using the Dirichlet and Neumann trace operators $\tau_D$ and $\tau_N$ 
on the space $H^{3/2}_\Delta(\Omega)$ (see \eqref{hs} below) we
provide a generalized boundary triple for $\{-\Delta + V,-\Delta + \overline V\}$ in Theorem~\ref{gbthurra} and 
conclude sufficient criteria for Robin-type boundary conditions $B \tau_D f= \tau_N f$ in Corollary~\ref{corollary_Robin}
to induce closed realizations with nonempty resolvent set in $L^2(\Omega)$. 

The following assumption is crucial in this section.

\begin{assumption}\label{assi}

Let $n\in\dN$ with $n\geq 2$.
\begin{enumerate}[\upshape (i)]
	\item \label{asssi.Om} 
	Let $\Omega \subset \R^n$ be a bounded Lipschitz domain or an unbounded Lipschitz domain with compact boundary $\partial\Omega$ (i.e. $\Omega = \mathbb{R}^n \setminus \overline{\Omega_0}$ with $\Omega_0$ being a bounded Lipschitz domain) and let the outward unit normal be denoted by $\nu$.
	\item \label{asssi.V} 
	Let $V: \Omega \to \C$ be a measurable function such that $V \in L^p(\Omega)$ with
	$p \ge 2n/3$ if $n>3$ and $p>2$ if $n=2,3$.
\end{enumerate}
\end{assumption}

 The $L^2$-based Sobolev spaces on $\Omega$ will be denoted by $H^s(\Omega)$, $s\geq 0$, 
and we shall also make use of the Hilbert spaces
\begin{equation}\label{hs}
H^s_\Delta(\Omega):=\bigl\{f\in H^s(\Omega):\Delta f\in L^2(\Omega)\bigr\},\quad s\geq 0,
\end{equation}
equipped with the norms induced by
\begin{equation}\label{gn}
 (f,g)_{H^s_\Delta(\Omega)}:= (f,g)_{H^s(\Omega)} +(\Delta f,\Delta g)_{L^2(\Omega)},\quad f,g\in H^s_\Delta(\Omega).
\end{equation}
It is clear that for $s\geq 2$ the spaces $H^s_\Delta(\Omega)$ coincide with $H^s(\Omega)$.

In the next lemma we show that the Sobolev spaces $H^s(\Omega)$ are contained in the domain of the maximal multiplication operator in $L^2(\Omega)$ induced by some function $W \in L^p(\Omega)$ under suitable assumptions on $s$ and $p$. In particular, if $V=W$ satisfies Assumption~\ref{assi}, then $H^{3/2}(\Omega)$ is contained in the domain of the maximal multiplication operator induced by $V$ in $L^2(\Omega)$.
The proof is similar as the proof of \cite[Proposition 3.8]{BS19}.

\begin{lemma}\label{boundlem}
Let Assumption~\ref{assi} \ref{asssi.Om} be satisfied, let $s > 0$, let $W \in L^p(\Omega)$ such that $p \geq n/s$ if $n > 2s$ and $p>2$ if $n \leq 2s$,
and let $f \in H^s(\Omega)$. Then $W f,\myoverline W f \in L^2(\Omega)$ 
	and for every $\varepsilon > 0$ there exists $C_\varepsilon > 0$ such that
	\begin{equation}\label{naschau}
		\| Wf \|_{L^2(\Omega)}=\| \myoverline Wf \|_{L^2(\Omega)} \le \varepsilon \| f \|_{H^{s}(\Omega)} + C_\varepsilon \| f \|_{L^{2}(\Omega)}, \quad  f \in H^{s}(\Omega).
	\end{equation}
In particular, if $V\in L^p(\Omega)$ is as in Assumption~\ref{assi} \ref{asssi.V}, then the above statements are true for $s = 3/2$ and $W=V$, moreover, 
\begin{equation}\label{jaja1}
\begin{split}
  H^{3/2}_\Delta(\Omega)&=\bigl\{f\in H^{3/2}(\Omega):-\Delta f+Vf \in L^2(\Omega)\bigr\}\\
  &=\bigl\{f\in H^{3/2}(\Omega):-\Delta f+\myoverline V f \in L^2(\Omega)\bigr\}.
  \end{split}
 \end{equation}
\end{lemma}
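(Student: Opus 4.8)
The plan is to prove the $L^2$-mapping property and the relative bound \eqref{naschau} via a Sobolev embedding combined with H\"older's inequality, and then to deduce \eqref{jaja1} as an immediate consequence. First I would establish, for $f\in H^s(\Omega)$ and $W\in L^p(\Omega)$, that $Wf\in L^2(\Omega)$. The idea is to write $\|Wf\|_{L^2(\Omega)}^2=\int_\Omega |W|^2|f|^2\,dx$ and apply H\"older's inequality with exponents $p/2$ and $(p/2)'=p/(p-2)$ (when $n>2s$, so that $p$ is finite); this gives $\|Wf\|_{L^2(\Omega)}\le \|W\|_{L^p(\Omega)}\|f\|_{L^{2p/(p-2)}(\Omega)}$. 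The condition $p\ge n/s$ is exactly what ensures $2p/(p-2)\le 2n/(n-2s)$, so that the Sobolev embedding $H^s(\Omega)\hookrightarrow L^{2n/(n-2s)}(\Omega)$ together with the boundedness of $\Omega$ near its (compact) boundary — or, in the exterior case, a localization argument using that $H^s(\Omega)\hookrightarrow L^q(\Omega)$ for all $q\in[2,2n/(n-2s)]$ holds on such domains — yields $f\in L^{2p/(p-2)}(\Omega)$ with a controlled norm. In the borderline case $n\le 2s$ one uses instead the embedding $H^s(\Omega)\hookrightarrow L^q(\Omega)$ for all $q<\infty$ and picks $q=2p/(p-2)$, which requires $p>2$. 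Since $|\myoverline W|=|W|$ pointwise, the identity $\|Wf\|_{L^2(\Omega)}=\|\myoverline W f\|_{L^2(\Omega)}$ is automatic, so the same bound applies to $\myoverline W$.

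Next I would upgrade the bound $\|Wf\|_{L^2(\Omega)}\le C\|f\|_{H^s(\Omega)}$ to the relative bound \eqref{naschau} with arbitrarily small $\varepsilon$. The standard device here is an interpolation/splitting argument: given $\varepsilon>0$, split $W=W\mathbf{1}_{\{|W|>R\}}+W\mathbf{1}_{\{|W|\le R\}}$. For the first piece, $\|W\mathbf{1}_{\{|W|>R\}}\|_{L^p(\Omega)}\to 0$ as $R\to\infty$ by dominated convergence (using $W\in L^p$), so the corresponding part of the H\"older estimate is bounded by $\varepsilon\|f\|_{H^s(\Omega)}$ once $R$ is large enough. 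For the second (bounded) piece, $|W\mathbf{1}_{\{|W|\le R\}}|\le R$, whence that contribution is $\le R\|f\|_{L^2(\Omega)}=C_\varepsilon\|f\|_{L^2(\Omega)}$. This is precisely the method used in the proof of \cite[Proposition 3.8]{BS19} referenced in the statement, and it carries over verbatim; the only point requiring mild care is that in the unbounded (exterior) case one must make sure the Sobolev embedding constant in $\|f\|_{L^{2p/(p-2)}(\Omega)}\lesssim \|f\|_{H^s(\Omega)}$ is uniform, which follows from the extension property of Lipschitz domains with compact boundary (cf.\ Appendix~\ref{appoja}), reducing everything to the corresponding embedding on $\R^n$.

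Finally, the specialization to $s=3/2$, $W=V$ under Assumption~\ref{assi}~\ref{asssi.V} is a direct check: if $n>3=2s$ the hypothesis $p\ge 2n/3=n/s$ matches the case $n>2s$, and if $n=2,3$ then $n\le 3=2s$ and the hypothesis $p>2$ matches the borderline case. Hence $Vf,\myoverline V f\in L^2(\Omega)$ for every $f\in H^{3/2}(\Omega)$. Then for $f\in H^{3/2}(\Omega)$ one has the equivalences $-\Delta f+Vf\in L^2(\Omega)\iff \Delta f\in L^2(\Omega)\iff -\Delta f+\myoverline V f\in L^2(\Omega)$, since $Vf$ and $\myoverline V f$ already lie in $L^2(\Omega)$. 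Comparing with the definition \eqref{hs} of $H^{3/2}_\Delta(\Omega)$ gives both equalities in \eqref{jaja1}.

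The main obstacle is the borderline/unbounded-domain bookkeeping: ensuring the Sobolev embedding $H^s(\Omega)\hookrightarrow L^{2p/(p-2)}(\Omega)$ holds with the correct exponent range and with a constant independent of the (possibly unbounded) domain, and handling the equality case $n=2s$ where no Sobolev exponent is finite. Both are resolved by the extension operator for Lipschitz domains with compact boundary and by the classical embedding theorems on $\R^n$; once these are in place, the truncation argument giving the small relative bound is routine.
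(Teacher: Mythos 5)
Your argument is correct and follows essentially the same route as the paper: H\"older's inequality with $1/p+1/q=1/2$ combined with the Sobolev embedding $H^s(\Omega)\hookrightarrow L^q(\Omega)$, and the level-set truncation of $W$ (your $W\mathbf{1}_{\{|W|\le R\}}$ is the paper's $W_m$) to obtain the relative bound \eqref{naschau} with arbitrarily small $\varepsilon$, after which \eqref{jaja1} follows exactly as you describe. No gaps; the exponent bookkeeping ($p\ge n/s$ iff $2p/(p-2)\le 2n/(n-2s)$, and the case $n\le 2s$ with $p>2$) matches the paper's proof.
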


\begin{proof}
Let $W\in L^p(\Omega)$ and let 
\begin{equation*}
 W_m(x)=\begin{cases} W(x), & \text{if} \;\; \vert W(x)\vert\leq m, \\ 0, & \text{if} \;\; \vert W(x)\vert > m, \end{cases}\qquad m\in\dN. 
\end{equation*}
Then we have $W_m\in L^p(\Omega)$, $m\in\dN$, and $\Vert W-W_m\Vert_{L^p(\Omega)}\rightarrow 0$ as $m\rightarrow\infty$.
For $f\in H^{s}(\Omega)$ we will make use of the estimate  
\begin{equation*}
 \| f \|_{L^q(\Omega)} \le C_{q, s} \| f \|_{H^{s}(\Omega)},\qquad q  \in 
 		\begin{cases}
 			[2, 2n/(n - 2s)], &\text{if} \;\; n>2s,\\
 			[2, \infty), &\text{if} \;\; n\leq 2s;
 		\end{cases}
\end{equation*}
cf. \cite[Theorem 8.12.6.I]{B12}. If $n>2s$ we use the generalized H\"older inequality with $1/p + 1/q=1/2$ and $2\leq q\leq 2n/(n-2s)$ (and hence $p\geq n/s$)
and obtain
\begin{equation*}
\begin{split}
 \Vert (W-W_m) f\Vert_{L^2(\Omega)}&\leq \Vert W-W_m \Vert_{L^{p}(\Omega)} \Vert f\Vert_{L^{q}(\Omega)} \\
 &\leq C_{q, s} \Vert W-W_m \Vert_{L^{p}(\Omega)} \Vert f\Vert_{H^{s}(\Omega)};
\end{split}
\end{equation*}
the same estimate holds also for $n \leq 2 s$ with $2\leq q<\infty$ (and hence $p>2$). Therefore, in both cases we conclude 
\begin{equation*}
\begin{split}
 \Vert W f\Vert_{L^2(\Omega)}&\leq \Vert (W-W_m) f\Vert_{L^2(\Omega)}+\Vert W_m f\Vert_{L^2(\Omega)}\\
 &\leq C_{q, s} \Vert W-W_m \Vert_{L^{p}(\Omega)} \Vert f\Vert_{H^{s}(\Omega)}+ m\Vert  f\Vert_{L^2(\Omega)},
\end{split}
\end{equation*}
which implies \eqref{naschau}.

Finally, if $V$ satisfies Assumption~\ref{assi}~(i), then the assumptions of this lemma are fulfilled for $s = 3/2$. Since $Vf,\myoverline Vf \in L^2(\Omega)$ for any $f\in H^{3/2}(\Omega)$ it is clear from \eqref{hs} with $s=3/2$ 
that \eqref{jaja1} holds.
\end{proof}
We recall that the Dirichlet Laplacian is defined by
\begin{equation}\label{diri}
H_D = -\Delta ,\quad \dom H_D=\bigl\{f\in H^{3/2}_\Delta(\Omega):\tau_D f=0\bigr\},
\end{equation}
and the Neumann Laplacian is defined by
\begin{equation}\label{dasindsie2}
H_N = -\Delta ,\quad \dom H_N=\bigl\{f\in H^{3/2}_\Delta(\Omega):\tau_N f=0\bigr\},
\end{equation}
where the Dirichlet trace 
\begin{equation}\label{diri3}
\tau_D:H^{3/2}_\Delta(\Omega)\rightarrow H^1(\partial\Omega)\subset L^2(\partial\Omega)
\end{equation}
and the  Neumann trace 
\begin{equation}\label{neumi3}
\tau_N:H^{3/2}_\Delta(\Omega)\rightarrow L^2(\partial\Omega)
\end{equation}
are as in \eqref{eq:tauD.rest.HsDelta} and \eqref{eq:tauN.bgm.ext}, respectively, with $s=3/2$. Both operators $H_D$ and $H_N$ in \eqref{diri}--\eqref{dasindsie2}
are self-adjoint and nonnegative in $L^2(\Omega)$; they coincide with the self-adjoint operators associated to 
the densely defined closed nonnegative forms 
\begin{equation}\label{hD.def}
\mathfrak{h}_D[f] = \|\nabla f\|_{L^2(\Omega)}^2, \quad \dom \mathfrak{h}_D = H^1_0(\Omega),
\end{equation}
and
\begin{equation}\label{hN.def}
\mathfrak{h}_N[f] = \|\nabla f\|_{L^2(\Omega)}^2, \quad \dom \mathfrak{h}_N = H^1(\Omega),
\end{equation}
via the first representation theorem \cite[Theorem~VI.2.1]{K66}, see, e.g., \cite[Theorem~6.9 and Theorem~6.10]{BGM25} for more details and \cite{JK81,JK95}
for the $H^{3/2}$-regularity of the operator domains.
In the following let $V\in L^p(\Omega)$ be as in Assumption~\ref{assi} and consider the differential expressions
\begin{equation*}
	-\Delta +V \quad\text{and}\quad -\Delta +\overline V. 
\end{equation*}
We define the corresponding  minimal operator realizations in $L^2(\Omega)$ by
\begin{equation}\label{sss}
\begin{split}
 S=-\Delta +\myoverline V,&\qquad \dom S= H^2_0(\Omega),\\
 \widetilde S=-\Delta +V,&\qquad \dom \widetilde S= H^2_0(\Omega),
 \end{split}
\end{equation}
and we shall also make use of the operators
\begin{equation}\label{ttt}
	\begin{split}
		T=-\Delta +V,&\qquad \dom T = H^{3/2}_{\Delta}(\Omega),\\
		\widetilde T=-\Delta + \myoverline V,&\qquad \dom \widetilde T = H^{3/2}_{\Delta}(\Omega).
	\end{split}
\end{equation}
It is not difficult to check that $\{S,\widetilde S\}$ form an adjoint pair and that $\{T,\widetilde T\}$ are well-defined; cf. \eqref{jaja1}.
In the proof of Theorem~\ref{gbthurra} below it will turn out en passant that $\{T,\widetilde T\}$ is a core of $\{S^*,\widetilde S^*\}$.

In the following we will make use of Theorem~\ref{ratemal-gbtversion} to construct a generalized boundary triple for the adjoint pair $\{S,\widetilde S\}$ in \eqref{sss}. 
To this end, we again consider the Dirichlet and Neumann trace operators $\tau_D$ and $\tau_N$ from \eqref{diri3}--\eqref{neumi3}, 
and choose the linear mappings
\begin{equation}\label{taus}
	\Gamma_0 = \widetilde\Gamma_0 = \tau_N\quad\text{and} \quad \Gamma_1 = \widetilde\Gamma_1 = \tau_D
\end{equation}
with domain $\dom T = \dom \widetilde T = H_{\Delta}^{3/2}(\Omega)$.
\begin{theorem}\label{gbthurra}
Let Assumption~\ref{assi} be satisfied. Consider the linear mappings
	\begin{equation*}
		\Gamma_0, \Gamma_1 : \dom T \to L^2(\partial\Omega)\quad\text{and} \quad\widetilde\Gamma_0, \widetilde\Gamma_1 : \dom \widetilde T \to L^2(\partial\Omega)
	\end{equation*}
	given by \eqref{taus}. Then $\{L^2(\partial\Omega), (\Gamma_0, \Gamma_1), (\widetilde\Gamma_0, \widetilde\Gamma_1)\}$ is a generalized boundary triple 
	for the adjoint pair $\{S,\widetilde S\}$ such that 
	\begin{equation}\label{dasindsie}
	\begin{split}
	 &A_0=T\upharpoonright\ker\Gamma_0 = -\Delta + V,\quad \dom A_0=\bigl\{f\in H^{3/2}_\Delta(\Omega):\tau_N f=0\bigr\},\\
	 &\widetilde A_0=\widetilde T\upharpoonright\ker\widetilde \Gamma_0 = -\Delta + \myoverline V,\quad \dom \widetilde A_0=\bigl\{g\in H^{3/2}_\Delta(\Omega):\tau_N g=0\bigr\},
	 \end{split}
	\end{equation}
	coincide with the Neumann realizations of $-\Delta + V$ and $-\Delta + \myoverline V$, respectively.
	Moreover, $A_0$ and $\widetilde A_0$ are closed operators in $L^2(\Omega)$ and there exists $\xi_1<0$ such that $(-\infty,\xi_1)\subset\rho(A_0)\cap\rho(\widetilde A_0)$.
\end{theorem}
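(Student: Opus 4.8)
The plan is to verify the four conditions (i)--(iv) of Theorem~\ref{ratemal-gbtversion} for the operators $T,\widetilde T$ in \eqref{ttt} and the boundary maps $\Gamma_0=\widetilde\Gamma_0=\tau_N$, $\Gamma_1=\widetilde\Gamma_1=\tau_D$ from \eqref{taus}, and then to extract the spectral statement. First I would establish the abstract Green identity~(i): for $f,g\in H^{3/2}_\Delta(\Omega)$ the second Green formula for the Laplacian on Lipschitz domains (recalled in the appendix, referring to \eqref{eq:tauD.rest.HsDelta}--\eqref{eq:tauN.bgm.ext}) gives $(-\Delta f,g)_{L^2(\Omega)}-(f,-\Delta g)_{L^2(\Omega)}=(\tau_D f,\tau_N g)_{L^2(\partial\Omega)}-(\tau_N f,\tau_D g)_{L^2(\partial\Omega)}$; adding and subtracting the multiplication terms $(Vf,g)-(f,\myoverline V g)=0$ (which makes sense by Lemma~\ref{boundlem}) yields exactly $(Tf,g)-(f,\widetilde T g)=(\Gamma_1 f,\widetilde\Gamma_0 g)-(\Gamma_0 f,\widetilde\Gamma_1 g)$, and symmetrically with the roles of $T,\widetilde T$ interchanged. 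For~(iv), the density of $\ker\Gamma_0\cap\ker\Gamma_1=\ker\tau_D\cap\ker\tau_N$ in $L^2(\Omega)$ follows since this set contains $C_c^\infty(\Omega)$ (indeed it contains $H^2_0(\Omega)=\dom S$); the same works for $\widetilde T$. These two steps are essentially bookkeeping with known trace properties.

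The substantive analytic input is in conditions (ii) and (iii), and this is where the perturbation argument enters. For the unperturbed case $V\equiv 0$ the restriction $-\Delta\upharpoonright\ker\tau_N$ is the Neumann Laplacian $H_N$ from \eqref{dasindsie2}, which is self-adjoint and nonnegative, so $(-\infty,0)\subset\rho(H_N)$. The idea is to treat $V$ as a relatively bounded perturbation with relative bound zero using the key estimate \eqref{naschau} of Lemma~\ref{boundlem} with $s=3/2$: for every $\varepsilon>0$, $\|Vf\|_{L^2(\Omega)}\le\varepsilon\|f\|_{H^{3/2}(\Omega)}+C_\varepsilon\|f\|_{L^2(\Omega)}$. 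One then needs that on $\dom H_N\subset H^{3/2}_\Delta(\Omega)$ the $H^{3/2}$-norm is controlled by the graph norm of $H_N$, i.e.\ $\|f\|_{H^{3/2}(\Omega)}\le c(\|H_N f\|_{L^2(\Omega)}+\|f\|_{L^2(\Omega)})$; this is precisely the $H^{3/2}$-regularity of the Neumann Laplacian on Lipschitz domains (cf.\ \cite{JK81,JK95,BGM25}). Combining these two facts shows $V$ is $H_N$-bounded with relative bound zero. By the Kato--Rellich-type perturbation theorem for the resolvent (or a direct Neumann series argument: for $\lambda$ real and very negative, $\|V(H_N-\lambda)^{-1}\|_{L^2(\Omega)\to L^2(\Omega)}<1$ because $(H_N-\lambda)^{-1}$ maps into $\dom H_N$ with graph-norm bound $O(1)$ and $\|(H_N-\lambda)^{-1}\|\to 0$), one gets that $A_0=H_N+V$ (acting as $-\Delta+V$ on $\ker\tau_N$) is closed and there exists $\xi_1<0$ with $(-\infty,\xi_1)\subset\rho(A_0)$. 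The same argument applied to $\myoverline V$ gives the analogous statement for $\widetilde A_0=H_N+\myoverline V$; since $(H_N+\myoverline V)^*=H_N+V$ (the adjoint of a relatively bounded perturbation with relative bound $<1$ of a self-adjoint operator), one obtains $A_0^*=\widetilde A_0$ and $\widetilde A_0^*=A_0$, which is condition~(iii). Enlarging $\xi_1$ if necessary so that $(-\infty,\xi_1)\subset\rho(A_0)\cap\rho(\widetilde A_0)$ simultaneously (automatic since $\lambda\in\rho(A_0)\iff\myoverline\lambda\in\rho(\widetilde A_0)$ and here $\lambda$ is real) finishes the spectral claim.

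For condition~(ii), the surjectivity of $\Gamma_0=\tau_N:\dom T\to L^2(\partial\Omega)$, I would argue as follows: fix a real $\lambda<\xi_1$, so $\lambda\in\rho(A_0)$. Given $\varphi\in L^2(\partial\Omega)$, by the known mapping properties of the Neumann trace on $H^{3/2}_\Delta(\Omega)$ (surjectivity of $\tau_N$ onto $L^2(\partial\Omega)$ for the free problem, recalled in the appendix) there is some $f_0\in H^{3/2}_\Delta(\Omega)$ with $\tau_N f_0=\varphi$; then $h:=(-\Delta+V-\lambda)f_0\in L^2(\Omega)$ and $f:=f_0-(A_0-\lambda)^{-1}h$ lies in $\dom T=H^{3/2}_\Delta(\Omega)$ (since $(A_0-\lambda)^{-1}h\in\dom A_0$) and satisfies $\tau_N f=\tau_N f_0-0=\varphi$. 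Hence $\tau_N$ is onto; the same for $\widetilde\Gamma_0$. With all four hypotheses verified, Theorem~\ref{ratemal-gbtversion} delivers that $\{S,\widetilde S\}$ in \eqref{sss} is an adjoint pair, $\{T,\widetilde T\}$ is a core of $\{S^*,\widetilde S^*\}$, and $\{L^2(\partial\Omega),(\Gamma_0,\Gamma_1),(\widetilde\Gamma_0,\widetilde\Gamma_1)\}$ is a generalized boundary triple; the identification \eqref{dasindsie} of $A_0,\widetilde A_0$ with the Neumann realizations is immediate from $\ker\Gamma_0=\ker\tau_N$.

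I expect the main obstacle to be a clean justification that $V$ is a relatively bounded perturbation of $H_N$ with relative bound zero in the sense needed — specifically, chaining the Sobolev estimate \eqref{naschau} with the $H^{3/2}$-regularity of $\dom H_N$, and then carefully controlling the adjoint relation $(H_N+\myoverline V)^*=H_N+V$ under a relative bound that is $<1$ but not necessarily in the form of a symmetric Kato perturbation (since $V$ is complex-valued and $H_N$ is self-adjoint, one uses the standard fact that for $B$ relatively bounded with relative bound $<1$ w.r.t.\ self-adjoint $A$, one has $(A+B)^*=A^*+B^*=A+B^*$ on the same domain). The trace-theoretic ingredients (Green's identity and surjectivity of $\tau_N$ on $H^{3/2}_\Delta(\Omega)$) are genuinely used but are quoted from the appendix and \cite{BGM25,GM11}, so they are not where the difficulty lies.
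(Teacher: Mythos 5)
Your overall route is the same as the paper's: verify conditions (i)--(iv) of Theorem~\ref{ratemal-gbtversion} with $\Gamma_0=\widetilde\Gamma_0=\tau_N$, $\Gamma_1=\widetilde\Gamma_1=\tau_D$, using the Green identity \eqref{greeni}, the surjectivity of $\tau_N$ on $H^{3/2}_\Delta(\Omega)$ from Theorem~\ref{tracen} (your detour through $(A_0-\lambda)^{-1}$ is unnecessary, since $f_0$ itself already lies in $\dom T$), the relative bound zero of $V$ with respect to $H_N$ obtained by chaining \eqref{naschau} with the estimate \eqref{boundn}, and the density of $C_0^\infty(\Omega)$ in $L^2(\Omega)$ for (iv). Two remarks on (iii): your appeal to the ``standard fact'' $(A+B)^*=A+B^*$ for a non-symmetric relatively bounded perturbation is stated too loosely, because the adjoint of $V\!\upharpoonright\!\dom H_N$ is the maximal multiplication operator by $\myoverline V$, not its restriction to $\dom H_N$; the clean argument (and the one the paper uses) is that Green's identity gives $A_0\subset\widetilde A_0^*$ and $\widetilde A_0\subset A_0^*$, and the bijectivity of $A_0-\lambda$ and $\widetilde A_0-\lambda$ for real $\lambda$ sufficiently negative upgrades these inclusions to equalities. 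Since you have all the ingredients for this, it is a presentational flaw rather than a failure.

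The genuine gap is at the very end. Theorem~\ref{ratemal-gbtversion} produces a generalized boundary triple for the adjoint pair formed by the restrictions $T\!\upharpoonright\!(\ker\Gamma_0\cap\ker\Gamma_1)$ and $\widetilde T\!\upharpoonright\!(\ker\widetilde\Gamma_0\cap\ker\widetilde\Gamma_1)$, i.e.\ to $\ker\tau_D\cap\ker\tau_N$. The theorem you are proving asserts the triple is a generalized boundary triple for the specific minimal pair $\{S,\widetilde S\}$ of \eqref{sss} with domain $H^2_0(\Omega)$, so you must identify $\ker\tau_D\cap\ker\tau_N=H^2_0(\Omega)$. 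You only note the easy inclusion $H^2_0(\Omega)\subset\ker\tau_D\cap\ker\tau_N$; the reverse inclusion is a nontrivial regularity statement on Lipschitz domains (functions in $H^{3/2}_\Delta(\Omega)$ with vanishing Dirichlet and Neumann traces lie in $H^2_0(\Omega)$), which the paper supplies as Lemma~\ref{reglem} and invokes precisely at this point. Without it, your argument only shows that the triple is a generalized boundary triple for some adjoint pair whose minimal operators a priori could be larger than $S,\widetilde S$, so the statement as formulated does not yet follow.
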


\begin{proof}
We will verify that the operators $T$ and $\widetilde T$ in \eqref{ttt} and the boundary mappings in \eqref{taus} satisfy the conditions (i)-(iv) in 
Theorem~\ref{ratemal-gbtversion}. In fact, for $f,g\in\dom T= \dom \widetilde T = H^{3/2}_{\Delta}(\Omega)$ we have $Vf,\myoverline Vg\in L^2(\Omega)$ and hence
\begin{equation}\label{ggg}
\begin{split}
 (T f, g)_{L^2(\Omega)} - (f, \widetilde T g)_{L^2(\Omega)} &=(-\Delta f + Vf, g)_{L^2(\Omega)} - (f, -\Delta g +\myoverline Vg)_{L^2(\Omega)}\\
 &=(-\Delta f , g)_{L^2(\Omega)} - (f, -\Delta g )_{L^2(\Omega)}\\
 &=( \tau_D f, \tau_N g)_{L^2(\partial\Omega)} - ( \tau_N f, \tau_D g)_{L^2(\partial\Omega)}\\
 &=(\Gamma_1 f,\widetilde\Gamma_0 g)_{L^2(\partial\Omega)} - ( \Gamma_0 f, \widetilde\Gamma_1 g)_{L^2(\partial\Omega)},
\end{split}
\end{equation}
where we have used \eqref{greeni}. Thus, (i) in Theorem~\ref{ratemal-gbtversion} holds. It is clear from Theorem~\ref{tracen} that 
$\ran\Gamma_0=\ran\widetilde\Gamma_0=L^2(\partial\Omega)$ and hence also (ii) is satisfied. In order to check (iii) 
we use the fact that the Neumann Laplacian $H_N$ in \eqref{dasindsie2}
is self-adjoint and nonnegative in $L^2(\Omega)$. From Lemma~\ref{boundlem} and \eqref{boundn} we conclude that  
for every $\varepsilon > 0$ there exists $C_\varepsilon > 0$ such that
	\begin{equation*}
		\| Vf \|_{L^2(\Omega)} \le \varepsilon \| H_N f \|_{L^{2}(\Omega)} + C_\varepsilon \| f \|_{L^{2}(\Omega)}, \quad  f \in \dom H_N.
	\end{equation*}
In other words, $V$ and $\myoverline V$ are both relatively bounded with respect to $H_N$ with bound smaller than $1$ (in fact, with bound 0) and hence it follows from 
 \cite[$\S\,$3, Theorem~8.9]{EE87} that the	
restrictions 
$A_0=T\upharpoonright\ker\Gamma_0$ and $\widetilde A_0=\widetilde T\upharpoonright\ker\widetilde \Gamma_0$ in \eqref{dasindsie} 
are closed operators with nonempty resolvent sets and there exists $\xi_1<0$ such that $(-\infty,\xi_1)\subset\rho(A_0)\cap\rho(\widetilde A_0)$.
Green's identity \eqref{ggg} yields that $A_0\subset \widetilde A_0^*$ and $\widetilde A_0\subset A_0^*$ and hence also 
\begin{equation*}
 A_0-\lambda\subset \widetilde A_0^*-\lambda \quad\text{and}\quad \widetilde A_0-\lambda \subset A_0^*-\lambda,\quad \lambda\in (-\infty,\xi_1).
\end{equation*}
As $A_0-\lambda$ and $\widetilde A_0-\lambda$ are bijective we conclude  $A_0 = \widetilde A_0^*$ and $\widetilde A_0 = A_0^*$, that is, (iii) in 
Theorem~\ref{ratemal-gbtversion} holds. Finally, condition (iv) is satisfied as $C_0^{\infty}(\Omega) \subset \ker\Gamma_0 \cap \ker\Gamma_1$ and 
 $C_0^{\infty}(\Omega) \subset \ker\widetilde\Gamma_0 \cap \ker\widetilde\Gamma_1$. 

Now it follows from Theorem~\ref{ratemal-gbtversion} that the operators
\begin{equation}\label{trest}
	T\upharpoonright\ker\Gamma_0\cap\ker\Gamma_1 \quad\text{and}\quad 
	 \widetilde T\upharpoonright\ker\widetilde \Gamma_0\cap\ker\widetilde\Gamma_1
\end{equation}
form an adjoint pair and $\{L^2(\partial\Omega), (\Gamma_0, \Gamma_1), (\widetilde\Gamma_0, \widetilde\Gamma_1)\}$ is a generalized boundary triple.
One finds with Lemma~\ref{reglem} that the domains of the operators in \eqref{trest} are $H^2_0(\Omega)$ and hence these restrictions coincide with
the minimal operators in \eqref{sss}. Therefore, $\{T,\widetilde T\}$ is a core of $\{S^*,\widetilde S^*\}$ and 
$\{L^2(\partial\Omega), (\Gamma_0, \Gamma_1), (\widetilde\Gamma_0, \widetilde\Gamma_1)\}$ is a generalized boundary triple for $\{S,\widetilde S\}$.
\end{proof}

In the next proposition we identify the operators $A_0$ and $\widetilde A_0$ in \eqref{dasindsie} as representing operators of the closed
sectorial forms \eqref{formi1}--\eqref{formi2} below, and thus $A_0$ and $\widetilde A_0$ are automatically both $m$-sectorial. In addition, 
we provide, as a variant of \cite[Lemma~VI.3.1]{K66}, a useful representation of their 
resolvents in terms of (the square root) of the resolvent of the Neumann Laplacian $H_N$ in \eqref{dasindsie2}.
\begin{proposition}\label{prop:wt.A0.res}
Let Assumption~\ref{assi} be satisfied.  Let the operators $A_0,\widetilde A_0$, and $H_N$ be as in \eqref{dasindsie} and \eqref{dasindsie2}, respectively. Then $A_0$ and 
$\widetilde A_0$ are both $m$-sectorial operators associated with the closed sectorial forms
\begin{equation}\label{formi1}
	\mathfrak{a}_0[f] := \|\nabla f\|_{L^2(\Omega)}^2  + \int_\Omega V|f|^2 \, \mathrm{d} x, \quad \dom \mathfrak{a}_0 = H^1(\Omega),
\end{equation} 
and
\begin{equation}\label{formi2}
	\widetilde{\mathfrak{a}}_0[f] := \|\nabla f\|_{L^2(\Omega)}^2  + \int_\Omega \overline V|f|^2 \, \mathrm{d} x, \quad \dom \widetilde{\mathfrak{a}}_0 = H^1(\Omega),
\end{equation} 
respectively.
Moreover, there exists $\xi_2<0$ such that for all $\lambda\in (-\infty,\xi_2)$ one has $\lambda\in\rho(A_0)\cap\rho(\widetilde A_0)$ and there is a bounded operator $C_1(\lambda)$ with 
$\|C_1(\lambda)\| \leq 1/2$ such that
\begin{equation}\label{tA0.res}
	(A_0-\lambda)^{-1}= (H_N-\lambda)^{-1/2} (I+C_1(\lambda))^{-1}(H_N-\lambda)^{-1/2}
\end{equation}
and
\begin{equation}\label{tA0.res2}
	(\widetilde A_0-\lambda)^{-1}= (H_N-\lambda)^{-1/2} (I+C_1(\lambda)^*)^{-1}(H_N-\lambda)^{-1/2}.
\end{equation}
\end{proposition}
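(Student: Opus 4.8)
The plan is to treat the perturbation $V$ (resp. $\overline V$) as a form-bounded perturbation of $\mathfrak h_N$ with form bound zero, which is exactly what Lemma~\ref{boundlem} together with the relative boundedness of $V$ with respect to $H_N$ (already recorded in the proof of Theorem~\ref{gbthurra}) gives. First I would observe that the sesquilinear form $\mathfrak a_0$ in \eqref{formi1} is the sum of the nonnegative closed form $\mathfrak h_N$ and the perturbation $f\mapsto\int_\Omega V|f|^2\,\mathrm dx$; since for every $\varepsilon>0$ one has $\|Vf\|_{L^2(\Omega)}\le\varepsilon\|H_Nf\|_{L^2(\Omega)}+C_\varepsilon\|f\|_{L^2(\Omega)}$ on $\dom H_N$, a standard interpolation/Cauchy--Schwarz argument (as in \cite[\S VI.1.6, \S VI.3.1]{K66}) upgrades this to a form estimate $\bigl|\int_\Omega V|f|^2\,\mathrm dx\bigr|\le\varepsilon\|\nabla f\|_{L^2(\Omega)}^2+C_\varepsilon\|f\|_{L^2(\Omega)}^2$ on $H^1(\Omega)$, with $\varepsilon$ arbitrarily small. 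Hence $\mathfrak a_0$ is a sectorial form which is closed and whose domain is $H^1(\Omega)=\dom\mathfrak h_N$, and its associated $m$-sectorial operator is exactly $A_0$ from \eqref{dasindsie}: indeed the operator $-\Delta+V$ acting on $\{f\in H^{3/2}_\Delta(\Omega):\tau_Nf=0\}$ satisfies the defining relation of the representing operator (Green's identity \eqref{greeni} identifies $\mathfrak a_0[f,g]=(-\Delta f+Vf,g)$ precisely when $\tau_Nf=0$), and the representing operator is unique. The same argument applied to $\overline V$ gives $\widetilde A_0$ and $\widetilde{\mathfrak a}_0$; note $\widetilde{\mathfrak a}_0[f,g]=\overline{\mathfrak a_0[g,f]}$, so the two forms are adjoint, consistent with $\widetilde A_0=A_0^*$ from Theorem~\ref{gbthurra}.

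For the resolvent representation I would follow the proof of \cite[Lemma~VI.3.1]{K66} almost verbatim, but with the square root of $H_N-\lambda$ in place of the square root of a fixed positive operator. Fix $\lambda<0$; then $H_N-\lambda\ge|\lambda|$ is boundedly invertible and $(H_N-\lambda)^{-1/2}$ is a bounded self-adjoint operator with $\|(H_N-\lambda)^{-1/2}\|\le|\lambda|^{-1/2}$. Define the sesquilinear form $\mathfrak c_\lambda$ on $\cH=L^2(\Omega)$ by setting, for $u,v\in\cH$,
\begin{equation*}
\mathfrak c_\lambda[u,v]:=\int_\Omega V\,\bigl((H_N-\lambda)^{-1/2}u\bigr)\,\overline{(H_N-\lambda)^{-1/2}v}\,\mathrm dx;
\end{equation*}
this is well defined and bounded because $(H_N-\lambda)^{-1/2}$ maps $\cH$ into $H^1(\Omega)$ continuously with norm $O(1)$ (by the form estimate for $\mathfrak h_N$) and the form-boundedness of $V$ then yields $|\mathfrak c_\lambda[u,v]|\le(\varepsilon\cdot O(1)+C_\varepsilon|\lambda|^{-1})\|u\|\|v\|$, which can be made $\le\tfrac12\|u\|\|v\|$ by first choosing $\varepsilon$ small and then $|\lambda|$ large. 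Let $C_1(\lambda)$ be the bounded operator on $\cH$ representing $\mathfrak c_\lambda$; then $\|C_1(\lambda)\|\le\tfrac12$ for $\lambda<\xi_2$ with $\xi_2<0$ suitably chosen, so $I+C_1(\lambda)$ is boundedly invertible by a Neumann series. The identity \eqref{tA0.res} then follows from the first representation theorem: for $u\in\cH$, put $f:=(H_N-\lambda)^{-1/2}(I+C_1(\lambda))^{-1}(H_N-\lambda)^{-1/2}u$; one checks $f\in H^1(\Omega)$ and, for all $g\in H^1(\Omega)$, $\mathfrak a_0[f,g]-\lambda(f,g)=(u,g)$, which by definition of the representing operator of the closed sectorial form $\mathfrak a_0-\lambda$ says $f\in\dom A_0$ and $(A_0-\lambda)f=u$; hence $f=(A_0-\lambda)^{-1}u$. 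The computation that $\mathfrak a_0[f,g]-\lambda(f,g)=(u,g)$ is the routine algebraic core: writing $g=(H_N-\lambda)^{-1/2}h$ is not possible for all $g$, so instead one expands $\mathfrak a_0[f,g]-\lambda(f,g)=\mathfrak h_N[f,g]-\lambda(f,g)+\mathfrak c$-term and uses $\mathfrak h_N[f,g]-\lambda(f,g)=((H_N-\lambda)^{1/2}f,(H_N-\lambda)^{1/2}g)=((I+C_1(\lambda))^{-1}(H_N-\lambda)^{-1/2}u,(H_N-\lambda)^{1/2}g)$, together with $\mathfrak c_\lambda$-term $=((H_N-\lambda)^{-1/2})$-conjugation, and the defining relation of $C_1(\lambda)$, to telescope everything to $((H_N-\lambda)^{-1/2}u,(H_N-\lambda)^{1/2}g)=(u,g)$.

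Finally, \eqref{tA0.res2} is obtained either by repeating the argument with $V$ replaced by $\overline V$ and noting that the resulting representing operator of $\mathfrak c_\lambda$ is exactly $C_1(\lambda)^*$ (because $\mathfrak c_\lambda$ for $\overline V$ is the adjoint form of $\mathfrak c_\lambda$ for $V$, using that $(H_N-\lambda)^{-1/2}$ is self-adjoint), or simply by taking adjoints in \eqref{tA0.res} and using $\widetilde A_0=A_0^*$ together with $(A_0-\lambda)^{-*}=(A_0^*-\lambda)^{-1}=(\widetilde A_0-\lambda)^{-1}$ for real $\lambda$. The number $\xi_2$ is then chosen as $\min$ of the $\xi_1$ from Theorem~\ref{gbthurra} and the threshold making $\|C_1(\lambda)\|\le\tfrac12$. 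The main obstacle I anticipate is purely technical bookkeeping: making sure $(H_N-\lambda)^{-1/2}$ really maps $L^2(\Omega)$ boundedly into $H^1(\Omega)$ with a constant uniform (or at least controlled) in $\lambda$ as $\lambda\to-\infty$, which is where the form estimate $\|\nabla((H_N-\lambda)^{-1/2}u)\|^2\le\|(H_N-\lambda)^{1/2}(H_N-\lambda)^{-1/2}u\|^2=\|u\|^2$ and $\|(H_N-\lambda)^{-1/2}u\|\le|\lambda|^{-1/2}\|u\|$ are used — nothing deep, but it must be combined correctly with the form bound $\varepsilon$ so that the product $\varepsilon\cdot\|(H_N-\lambda)^{-1/2}\|^2_{L^2\to H^1}$ plus $C_\varepsilon|\lambda|^{-1}$ stays below $\tfrac12$; since the $H^1$-operator-norm of $(H_N-\lambda)^{-1/2}$ does \emph{not} go to zero (it stays $O(1)$), one genuinely needs the form bound $\varepsilon$ to be small first, and only afterwards sends $\lambda\to-\infty$ to kill the $C_\varepsilon|\lambda|^{-1}$ term.
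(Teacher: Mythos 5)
Your overall strategy is the same as the paper's: view $\mathfrak a_0$ as $\mathfrak h_N$ plus a form-bounded perturbation with relative bound zero, represent the perturbation through the bounded operator $C_1(\lambda)$ sandwiched between copies of $(H_N-\lambda)^{1/2}$, and invert, exactly in the spirit of \cite[Lemma~VI.3.1 and Theorem~VI.3.2]{K66}. Your direct construction of $C_1(\lambda)$ as the operator representing the bounded form $\mathfrak c_\lambda$ on $L^2(\Omega)$, and your ``first $\varepsilon$ small, then $\lambda\to-\infty$'' bookkeeping, are fine and reproduce the paper's bound $\|C_1(\lambda)\|\le 1/2$. Two remarks on the details. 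First, a minor one: the paper obtains the form bound $|\int_\Omega V|f|^2\,\mathrm dx|\le\delta\,\mathfrak h_N[f]+\widetilde C_\delta\|f\|^2$ on all of $H^1(\Omega)$ directly from Lemma~\ref{boundlem} applied to $W=|V|^{1/2}\in L^{2p}(\Omega)$ with $s=1$; your route of ``upgrading'' the operator bound $\|Vf\|\le\varepsilon\|H_Nf\|+C_\varepsilon\|f\|$ (valid only on $\dom H_N$) to a form bound on $H^1(\Omega)$ is possible but needs an extra argument, and also the identity you invoke for the representing relation is the first Green identity \eqref{usethissomewhere}, not \eqref{greeni}.

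The genuine gap is in the identification of $A_0$ with the representing operator $\widehat A_0$ of $\mathfrak a_0$. Showing that every $f\in\dom A_0$ (i.e.\ $f\in H^{3/2}_\Delta(\Omega)$ with $\tau_Nf=0$) satisfies $\mathfrak a_0[f,g]=((-\Delta+V)f,g)_{L^2(\Omega)}$ for all $g\in H^1(\Omega)$ only yields the inclusion $A_0\subset\widehat A_0$; ``the representing operator is unique'' does not close this, since a priori $\widehat A_0$ could be a proper extension of $A_0$ whose domain contains functions outside $H^{3/2}_\Delta(\Omega)$ or with nonvanishing (weak) Neumann trace. The paper closes the gap by observing that some $\lambda_0<0$ lies in $\rho(A_0)\cap\rho(\widehat A_0)$ (using $(-\infty,\xi_1)\subset\rho(A_0)$ from Theorem~\ref{gbthurra} and $m$-sectoriality of $\widehat A_0$), so that $A_0-\lambda_0\subset\widehat A_0-\lambda_0$ with both maps bijective forces $A_0=\widehat A_0$; without this (or an independent $H^{3/2}$-regularity statement for $\dom\widehat A_0$, which is nontrivial) the identification is unproved. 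Note that the gap propagates into your verification of \eqref{tA0.res}: you conclude ``$f\in\dom A_0$'' from the first representation theorem for $\mathfrak a_0$, which presupposes precisely $A_0=\widehat A_0$. Once this step is added, your argument goes through and is essentially the paper's proof.
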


\begin{proof}
Recall the definition of the form $\mathfrak{h}_N$ in \eqref{hN.def} and that  the Neumann Laplacian in \eqref{dasindsie2}
is the self-adjoint operator associated to $\mathfrak{h}_N$ via the first representation theorem.
Next, we consider the form
\begin{equation*}
\mathfrak{v}[f] := \int_\Omega V|f|^2 \, \mathrm{d} x, \quad \dom \mathfrak{v} := \dom  \mathfrak{h}_N,
\end{equation*}
and employ Lemma~\ref{boundlem} for $W = |V|^{1/2} \in L^{2p}(\Omega)$ and $s=1$. It follows that for any $\delta>0$, there exists $\widetilde C_\delta>0$ such that
\begin{equation}\label{V.rel.bdd}
|\mathfrak{v}[f]| \leq \||V|^\frac12 f\|_{L^2(\Omega)}^2 \leq \delta \mathfrak{h}_N[f] + \widetilde C_\delta \|f\|_{L^2(\Omega)}^2, \quad f \in \dom  \mathfrak{h}_N.
\end{equation}
This shows that the form $\mathfrak{v}$ is a relatively bounded perturbation of $\mathfrak{h}_N$ with bound~$0$. By \cite[Theorem~VI.3.4]{K66}, the form 
$\mathfrak{a}_0= \mathfrak{h}_N + \mathfrak{v}$ is closed and sectorial, and hence it defines an $m$-sectorial operator $\widehat A_0$. Via the first Green's identity \eqref{usethissomewhere}, one verifies that $A_0 \subset \widehat A_0$. Since there exists $\lambda_0<0$ such that 
$\lambda_0 \in \rho(A_0) \cap \rho(\widehat A_0)$, it follows that $A_0 =\widehat A_0$. This justifies the first claim.
	
To show \eqref{tA0.res}, we slightly adjust the proof of \cite[Theorem~VI.3.2]{K66}. To this end, recall first that by the second representation theorem, see \cite[Theorem~VI.2.23 and Problem~VI.2.25]{K66}, we have for all $\lambda <0$ that 
$$\dom \mathfrak h_N  = \dom H_N^{1/2} = \dom (H_N-\lambda)^{1/2}$$ 
and 
\begin{equation}\label{hn.2nd}
(\mathfrak{h}_N-\lambda)[f,g] = ((H_N-\lambda)^{1/2} f, (H_N-\lambda)^{1/2} g)_{L^2(\Omega)}, \quad f,g \in \dom \mathfrak h_N.
\end{equation}
Next, it follows from  \eqref{V.rel.bdd} with $\delta=1/4$ that for all $\lambda < \lambda_1:= -4 \widetilde C_{1/4}$,
\begin{equation*}
|\mathfrak{v}[f]|\leq \frac 14 (\mathfrak{h}_N-\lambda)[f] +\left(\frac{1}{4}\lambda + \widetilde C_{1/4}\right)
\Vert f\Vert^2_{L^2(\Omega)}\leq \frac 14 (\mathfrak{h}_N-\lambda)[f] , \quad f \in \dom \mathfrak h_N.
\end{equation*}
Hence \cite[Lemma~VI.3.1]{K66} yields that for all $\lambda<\lambda_1$ there exists a bounded operator $C_1(\lambda)$ with $\|C_1(\lambda)\| \leq 1/2$ such that
\begin{equation}\label{v.2nd}
\mathfrak{v}[f,g] = \bigl(C_1(\lambda) (H_N-\lambda)^{1/2} f, (H_N-\lambda)^{1/2} g\bigr)_{L^2(\Omega)}, \quad f,g \in \dom \mathfrak h_N.
\end{equation}
Combing \eqref{hn.2nd} and \eqref{v.2nd}, we obtain for all $\lambda < \lambda_1$ and all  $f,g \in \dom \mathfrak h_N$ that
\begin{equation*}
\begin{aligned}
(\mathfrak{a}_0-\lambda)[f,g] &= (\mathfrak{h}_N-\lambda)[f,g] + \mathfrak{v}[f,g] 
\\
& =  \bigl((I+C_1(\lambda)) (H_N-\lambda)^{1/2}  f, (H_N-\lambda)^{1/2} g \bigr)_{L^2(\Omega)}.
\end{aligned}
\end{equation*}
Let further $f \in \dom  A_0  \subset \dom \mathfrak h_N$. Then for all $g \in \dom \mathfrak h_N$, 
\begin{equation*}
(( A_0-\lambda)f, g)_{L^2(\Omega)} = \bigl( (I+C_1(\lambda)) (H_N-\lambda)^{1/2} f, (H_N-\lambda)^{1/2} g \bigr)_{L^2(\Omega)},
\end{equation*}
and hence, since $(H_N-\lambda)^{1/2}$ is self-adjoint, 
\begin{equation*}
(A_0-\lambda)f =	(H_N-\lambda)^{1/2} (I+C_1(\lambda))(H_N-\lambda)^{1/2} f, \quad f \in \dom  A_0.
\end{equation*}
Therefore, we have shown that
\begin{equation}\label{wtA0.incl}
 A_0-\lambda \subset  (H_N-\lambda)^{1/2} (I+C_1(\lambda))(H_N-\lambda)^{1/2}.
\end{equation}
Recall that $\|C_1(\lambda)\|\leq 1/2$ for all $\lambda<\lambda_1$, thus $I+C_1(\lambda)$, and therefore also the operator on the right hand side of \eqref{wtA0.incl}, is boundedly invertible for such $\lambda$. On the other hand, since $A_0-\lambda$ is boundedly invertible for all $\lambda < \lambda_0$ (see above), we arrive at
\begin{equation}\label{wtA0.eq}
A_0-\lambda =  (H_N-\lambda)^{1/2} (I+C_1(\lambda))(H_N-\lambda)^{1/2}
\end{equation}
for all $\lambda < \xi_2:=\min\{\lambda_0,\lambda_1\}$. Finally, \eqref{tA0.res} follows by inverting \eqref{wtA0.eq}, and the assertions for 
$\widetilde{\mathfrak{a}}_0$ and $\widetilde A_0$ follow in the same way by replacing $V$ with $\overline V$ and taking adjoints.
\end{proof}

In the rest of this section we use the triple 
$\{L^2(\partial\Omega), (\Gamma_0, \Gamma_1), (\widetilde\Gamma_0, \widetilde\Gamma_1)\}$ to show that restrictions of $T$ that satisfy Robin-type boundary conditions $\tau_N f=B\tau_D f$ for a bounded operator $B$ in $L^2(\partial \Omega)$ are closed and have nonempty resolvent set, and we provide an explicit Krein-type resolvent formula for these operators; cf. Corollary~\ref{corollary_Robin}. For that purpose we use decay properties of the associated Weyl function; for this we need the following preparatory lemma. 

\begin{lemma} \label{lemma_bounded}
Let Assumption~\ref{assi} be satisfied.  Then there exists $\xi_3\leq-1$ such that for all $\lambda \in (-\infty,\xi_3)$ and all $\delta>0$ the densely defined operator $$\Gamma_1 (H_N - \lambda)^{-1/4 - \delta} = \widetilde{\Gamma}_1 (H_N - \lambda)^{-1/4 - \delta}$$ admits an everywhere defined bounded extension
\begin{equation*}
C_2(\lambda): L^2(\Omega) \rightarrow L^2(\partial \Omega),
\end{equation*}
 which is uniformly bounded in $\lambda \in (-\infty,\xi_3)$.
\end{lemma}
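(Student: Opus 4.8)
The plan is to reduce the boundedness of $\Gamma_1(H_N-\lambda)^{-1/4-\delta}$ to a combination of a mapping property of the Neumann trace $\tau_D=\Gamma_1$ on the scale of spaces $H^s_\Delta(\Omega)$ (recalled in the appendix, cf.\ \eqref{diri3} and Theorem~\ref{tracen}) and the standard fact that $(H_N-\lambda)^{-\theta}$ maps $L^2(\Omega)$ boundedly into the form-type Sobolev space $H^{2\theta}(\Omega)$ for $\theta\in[0,1]$, with norm controlled in $\lambda<0$. More precisely, first I would fix $\theta:=1/4+\delta$ (taking $\delta$ small enough that $2\theta=1/2+2\delta<3/2$; for larger $\delta$ the statement only gets easier since higher powers of the resolvent are more smoothing) and observe that, by the spectral theorem for the nonnegative self-adjoint operator $H_N$, the operator $(H_N-\lambda)^{-\theta}$ is bounded from $L^2(\Omega)$ into $\dom H_N^{\theta}\subset H^{2\theta}(\Omega)$; the embedding $\dom H_N^\theta\hookrightarrow H^{2\theta}(\Omega)$ for $\theta\le 1$ follows from interpolation between $\dom H_N^0=L^2(\Omega)$ and $\dom H_N^{1}=H^1(\Omega)$ (the form domain), combined with $\|H_N^\theta u\|\ge(-\lambda)^{\theta-1}\|(H_N-\lambda)^\theta u\|$-type comparisons. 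This gives a constant $c$, independent of $\lambda<0$, with $\|(H_N-\lambda)^{-\theta}g\|_{H^{2\theta}(\Omega)}\le c\,\|g\|_{L^2(\Omega)}$.

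Next I would invoke the boundedness of the Neumann trace $\tau_D=\Gamma_1$ as a map $H^{s}(\Omega)\to L^2(\partial\Omega)$ for $s>1/2$ (this is the trace theorem on Lipschitz domains recalled in the appendix; note $\Gamma_1$ here is $\tau_D$, the Dirichlet trace in the notation of Section~\ref{diffopsec}, so only $s>1/2$ is needed rather than the more delicate $H^{3/2}_\Delta$-bound for $\tau_N$). Since $2\theta=1/2+2\delta>1/2$, the composition
\begin{equation*}
\Gamma_1(H_N-\lambda)^{-\theta}:L^2(\Omega)\xrightarrow{(H_N-\lambda)^{-\theta}}H^{2\theta}(\Omega)\xrightarrow{\Gamma_1}L^2(\partial\Omega)
\end{equation*}
is bounded, with operator norm $\le C_{\mathrm{tr}}\cdot c$, a bound that does not depend on $\lambda\in(-\infty,\xi_3)$ for any fixed $\xi_3\le-1$. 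The densely defined operator $\Gamma_1(H_N-\lambda)^{-1/4-\delta}$ (defined a priori on, say, $C_0^\infty(\Omega)$ or on $\ran(H_N-\lambda)^{1/4+\delta-1/2}$ where the graph-norm closability of $\Gamma_1$ is available) therefore admits a unique everywhere defined bounded extension $C_2(\lambda)$ on $L^2(\Omega)$, and the uniform bound over $\lambda\in(-\infty,\xi_3)$ is exactly the $\lambda$-independence of $c$ and $C_{\mathrm{tr}}$. The same argument applies verbatim to $\widetilde\Gamma_1=\Gamma_1=\tau_D$, so the two displayed operators coincide and share the extension.

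The main obstacle I anticipate is the careful bookkeeping of \emph{uniform-in-$\lambda$} bounds at the level of the fractional powers: one must verify that $\|(H_N-\lambda)^{-\theta}\|_{L^2\to H^{2\theta}}$ does not blow up as $\lambda\to-\infty$ (it in fact decays, which is what makes the Weyl-function decay arguments later in the section work) and that the interpolation identification $[\,L^2(\Omega),H^1(\Omega)\,]_{2\theta}=H^{2\theta}(\Omega)$ with $\dom H_N^\theta$ is legitimate for the relevant range of $\theta$ on a Lipschitz domain. A clean way around delicate interpolation is to split $(H_N-\lambda)^{-1/4-\delta}=(H_N-\lambda)^{-1/2}(H_N-\lambda)^{1/4-\delta}$ and note $(H_N-\lambda)^{-1/2}:L^2(\Omega)\to H^1(\Omega)$ boundedly (second representation theorem, $\dom(H_N-\lambda)^{1/2}=\dom\mathfrak h_N=H^1(\Omega)$, cf.\ \eqref{hn.2nd}) with $\|(H_N-\lambda)^{1/4-\delta}\|\le |\lambda|^{1/4-\delta}$ by the spectral theorem — but this loses a factor of $|\lambda|^{1/4-\delta}$ and hence only gives a crude, non-uniform bound; so for the \emph{uniform} statement the interpolation/fractional-power route, or a direct spectral estimate $\|(H_N-\lambda)^{-\theta}g\|_{H^1}^2\le\int_{[0,\infty)}\frac{1+t}{(t-\lambda)^{2\theta}}\,d\|E_t g\|^2\le\sup_{t\ge0}\frac{1+t}{(t-\lambda)^{2\theta}}\,\|g\|^2$ for $2\theta>1/2$ combined with trace boundedness on the form-domain scale, is the part that needs genuine care. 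Everything else is a routine composition of known bounded maps.
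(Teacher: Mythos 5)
Your main argument is correct and is essentially the paper's proof: show that $(H_N-\lambda)^{-a}$ maps $L^2(\Omega)$ uniformly boundedly (for $\lambda\le -1$) into $H^{\min\{1,2a\}}(\Omega)$ — the case $a=1/2$ via the second representation theorem and the form domain, intermediate $a$ by interpolation, larger $a$ by composition with a uniformly bounded power of the resolvent — and then compose with the standard Dirichlet trace $\tau_D=\Gamma_1=\widetilde\Gamma_1$ on $H^{1/2+2\delta}(\Omega)$. Two slips in your asides are not load-bearing but should be noted: the form domain is $\dom H_N^{1/2}=H^1(\Omega)$, not $\dom H_N^{1}$ (so the interpolation is only valid for $\theta\le 1/2$, which suffices here), and the suggested fallback estimate with $\sup_{t\ge 0}(1+t)/(t-\lambda)^{2\theta}$ fails because this supremum is infinite for $2\theta<1$, so the interpolation route you describe first (the paper's route) is the one that actually works.
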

\begin{proof}
  Throughout the proof, we assume that $\lambda \leq  \xi_1 -1$, where $\xi_1<0$ is as in Theorem~\ref{gbthurra}.
  First, we claim for any $a \geq 0$ that
  \begin{equation} \label{resolvent_power_bounded}
    (H_N - \lambda)^{-a}: L^2(\Omega) \rightarrow H^{\min{\{1,2a\}}}(\Omega)
  \end{equation}
  is well-defined and uniformly bounded in $\lambda \leq  \xi_1 -1$. For $a=0$ this is clearly true. Next, we consider the case $a=1/2$. 
  Let $f \in L^2(\Omega)$ and let $\mathfrak h_N$ be the form in \eqref{hN.def}. Then one gets with the help of the second representation theorem \cite[Theorem~VI.2.23]{K66} applied for the nonnegative operator $H_N - \lambda$, $\lambda \leq  \xi_1 -1$ that
  \begin{equation*}
    \begin{split}
      \| (H_N - \lambda)^{-1/2} f\|_{H^1(\Omega)}^2 &\leq (\mathfrak{h}_N -\lambda)[(H_N - \lambda)^{-1/2} f] + \| (H_N - \lambda)^{-1/2} f \|_{L^2(\Omega)}^2 \\
      &\leq 2 \| f \|_{L^2(\Omega)}^2,
    \end{split}
  \end{equation*}
  which yields the claim in~\eqref{resolvent_power_bounded} for $a=1/2$. Thus, the statement for $a \in (0,1/2)$ follows from an interpolation argument, see, e.g., \cite{LM72}. Eventually, the claim in~\eqref{resolvent_power_bounded} is also true for $a>1/2$, as then $(H_N-\lambda)^{-a+1/2}$ is  uniformly bounded in $L^2(\Omega)$ in $\lambda \leq  \xi_1 -1$, and thus
  \begin{equation*}
    \begin{split}
    \|(H_N-\lambda)^{-a}&\|_{L^2(\Omega) \rightarrow H^1(\Omega)} \\
    &\leq \|(H_N-\lambda)^{-1/2}\|_{L^2(\Omega) \rightarrow H^1(\Omega)} \|(H_N-\lambda)^{-a+1/2}\|_{L^2(\Omega) \rightarrow L^2(\Omega)} \leq C.
    \end{split}
  \end{equation*}
  
  To proceed, denote by $\tau_D$ the Dirichlet trace defined on $H^{\min{\{1/2+2\delta,1\}}}$; cf.~\eqref{tauii}. 
  Clearly, $\tau_D$ is an extension of $\Gamma_1 = \widetilde{\Gamma}_1$, and taking~\eqref{resolvent_power_bounded} for $a = 1/4 + \delta$ into account, we find that
  \begin{equation*}
    C_2(\lambda) := \tau_D (H_N - \lambda)^{-1/4-\delta}: L^2(\Omega) \rightarrow L^2(\partial \Omega)
  \end{equation*}
  is well-defined and uniformly bounded with respect to $\lambda \in (-\infty,\xi_3)$, if $\xi_3$ is chosen smaller than $ \xi_1 -1$.
\end{proof}

In the next proposition we collect some properties of the Weyl functions corresponding to the generalized boundary triple 
$\{L^2(\partial\Omega), (\Gamma_0, \Gamma_1), (\widetilde\Gamma_0, \widetilde\Gamma_1)\}$. In particular, in item~(iv) we 
prove decay estimates for $M$ and $\widetilde{M}$.

\begin{proposition}\label{mprop}
Let Assumption~\ref{assi} be satisfied. 
Let $M$ and $\widetilde M$ be the Weyl functions corresponding to the generalized boundary triple 
$\{L^2(\partial\Omega), (\Gamma_0, \Gamma_1), (\widetilde\Gamma_0, \widetilde\Gamma_1)\}$ in Theorem~\ref{gbthurra}.
Then the following holds for all $\lambda\in\rho(A_0)$ and $\mu\in\rho(\widetilde A_0)$:
\begin{enumerate}[\upshape (i)]
 \item $M(\lambda)\tau_N f_\lambda=\tau_D f_\lambda$ for $f_\lambda\in H^{3/2}_\Delta(\Omega)$ such that $(-\Delta+V)f_\lambda=\lambda f_\lambda$;
 \item $\widetilde M(\mu)\tau_N g_\mu=\tau_D g_\mu$ for $g_\mu\in H^{3/2}_\Delta(\Omega)$ such that $(-\Delta+\myoverline V)g_\mu=\mu g_\mu$;
 \item $\ran M(\lambda)\subset H^1(\partial\Omega)$ and $\ran\widetilde M(\mu)\subset H^1(\partial\Omega)$, and, in particular, the operators 
 $M(\lambda)$ and $\widetilde M(\mu)$ are compact in $L^2(\partial\Omega)$;
 \item For all $\varepsilon > 0$ there exists a constant $C = C(\varepsilon)$ such that 
 $$\Vert M(\lambda)\Vert \leq C |\lambda|^{-1/2+\varepsilon}
 \quad\text{and}\quad \Vert \widetilde M(\mu)\Vert \leq C |\mu|^{-1/2+\varepsilon},
 \quad\lambda,\mu\rightarrow -\infty.$$
\end{enumerate}
\end{proposition}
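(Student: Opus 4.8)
The plan is to prove items (i)--(iii) as direct consequences of the abstract machinery in Proposition~\ref{gam-m-prop}, and to devote the bulk of the work to the decay estimate (iv), which is the main point.

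For (i) and (ii), recall that $M(\lambda)=\Gamma_1\gamma(\lambda)$ where $\gamma(\lambda)=(\Gamma_0\upharpoonright\ker(T-\lambda))^{-1}$; hence for $f_\lambda\in\ker(T-\lambda)=\{f\in H^{3/2}_\Delta(\Omega):(-\Delta+V)f=\lambda f\}$ we have $\gamma(\lambda)\Gamma_0 f_\lambda=f_\lambda$ and therefore $M(\lambda)\Gamma_0 f_\lambda=\Gamma_1 f_\lambda$; unwinding the identifications $\Gamma_0=\tau_N$, $\Gamma_1=\tau_D$ from \eqref{taus} gives exactly $M(\lambda)\tau_N f_\lambda=\tau_D f_\lambda$, and similarly for $\widetilde M$. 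This is Proposition~\ref{gam-m-prop}~(iv) transcribed into the present notation. For (iii), by definition $\ran M(\lambda)=\ran(\Gamma_1\upharpoonright\ker(T-\lambda))=\tau_D(\ker(T-\lambda))$, and since $\ker(T-\lambda)\subset H^{3/2}_\Delta(\Omega)$, the mapping property \eqref{diri3} of the Dirichlet trace yields $\ran M(\lambda)\subset H^1(\partial\Omega)$; the same for $\widetilde M$. Compactness of $M(\lambda)$ and $\widetilde M(\mu)$ in $L^2(\partial\Omega)$ then follows from the compact embedding $H^1(\partial\Omega)\hookrightarrow L^2(\partial\Omega)$ (the boundary $\partial\Omega$ is compact by Assumption~\ref{assi}~\ref{asssi.Om}), together with the fact that $M(\lambda)$ is bounded from $L^2(\partial\Omega)$ into $L^2(\partial\Omega)$ with range in $H^1(\partial\Omega)$, provided one knows this map is also bounded into $H^1(\partial\Omega)$; this last point follows from the closed graph theorem applied to $M(\lambda):L^2(\partial\Omega)\to H^1(\partial\Omega)$ using boundedness of $M(\lambda)$ in $L^2$ and continuity of $\tau_D$.

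The heart of the matter is (iv). The strategy is to factor $M(\lambda)$ through resolvent powers of the Neumann Laplacian and exploit Lemma~\ref{lemma_bounded} together with the resolvent representation \eqref{tA0.res} of Proposition~\ref{prop:wt.A0.res}. Concretely, I would use Proposition~\ref{gam-m-prop}~(iii), which gives $\widetilde\gamma(\myoverline\lambda)^*=\Gamma_1(A_0-\lambda)^{-1}$, together with the identity $M(\lambda)=\widetilde M(\myoverline\lambda)^*$ from Proposition~\ref{gam-m-prop}~(v) and the variation-of-$\lambda$ formula in Proposition~\ref{gam-m-prop}~(vi), to reduce the estimate of $\|M(\lambda)\|$ for $\lambda\to-\infty$ to estimating an operator of the form $\Gamma_1$ applied to the resolvent of $A_0$ (and its adjoint). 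The key algebraic step: for $\lambda$ sufficiently negative one has $M(\lambda)^*=\Gamma_1(A_0-\lambda)^{-1}\gamma(\myoverline{?})$-type expressions; more efficiently, since $\gamma(\lambda)$ and $M(\lambda)$ satisfy $M(\lambda)-M(\lambda_0)^{\dots}=\widetilde\gamma(\lambda_0)^*(\lambda-\myoverline\lambda_0)(I+(\lambda-\lambda_0)(A_0-\lambda)^{-1})\gamma(\lambda_0)$, write $M(\lambda)$ (up to a bounded-in-$\lambda$ correction) as $\widetilde\gamma(\lambda_0)^*\,\lambda\,(A_0-\lambda)^{-1}\gamma(\lambda_0)$, and then insert the factorization $(A_0-\lambda)^{-1}=(H_N-\lambda)^{-1/2}(I+C_1(\lambda))^{-1}(H_N-\lambda)^{-1/2}$ from \eqref{tA0.res}. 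Splitting the middle power $(H_N-\lambda)^{-1/2}=(H_N-\lambda)^{-1/4-\delta}(H_N-\lambda)^{-1/4+\delta}$ and absorbing $\widetilde\gamma(\lambda_0)^*(H_N-\lambda)^{-1/4+\delta}$ on the left — which, after noting $\widetilde\gamma(\lambda_0)^*$ involves $\Gamma_1$ times a fixed resolvent and using Lemma~\ref{lemma_bounded} with the roles suitably arranged — one is left with a product of: a uniformly bounded factor from Lemma~\ref{lemma_bounded}, the uniformly bounded $(I+C_1(\lambda))^{-1}$ (norm $\le 2$ since $\|C_1(\lambda)\|\le 1/2$), and the scalar $\lambda$ times $(H_N-\lambda)^{-1+2\delta}$-type factors whose operator norm is $O(|\lambda|^{2\delta-\dots})$. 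Counting powers: $\lambda\cdot(H_N-\lambda)^{-1/4-\delta}\cdot(H_N-\lambda)^{-1/4-\delta}$ contributes $|\lambda|\cdot|\lambda|^{-1/2-2\delta}=|\lambda|^{1/2-2\delta}$, which is the wrong sign, so the correct bookkeeping must instead keep one full $(H_N-\lambda)^{-1}$ against the $\lambda$ and place both trace-absorbing factors as $(H_N-\lambda)^{-1/4-\delta}$ drawn from a surplus power; I would organize this as $M(\lambda)\sim \lambda\, C_2(\lambda)\,(I+C_1(\lambda))^{-1}\,(H_N-\lambda)^{-1}\,C_2(\lambda)^*$ using $\|(H_N-\lambda)^{-1}\|\le |\lambda|^{-1}$ (from $\lambda<0$ and $H_N\ge0$), wait—that gives $O(1)$, not decay; to get the extra $|\lambda|^{-1/2+\varepsilon}$ one distributes as $\lambda\cdot(H_N-\lambda)^{-3/4-\delta/2}$ on each side absorbing $\Gamma_1$, leaving $\|(H_N-\lambda)^{-3/4}\|^2\cdot|\lambda|=|\lambda|^{-3/2}\cdot|\lambda|=|\lambda|^{-1/2}$, with the $\delta$'s producing the $\varepsilon$.

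The main obstacle, then, is the careful power-counting: splitting the two resolvent half-powers so that exactly enough smoothing is spent on the Dirichlet trace (via Lemma~\ref{lemma_bounded}, which consumes $1/4+\delta$ of resolvent power) on each side, while retaining a net $(H_N-\lambda)^{-3/4+\delta'}$-worth of decay on each factor, and then combining with the scalar $|\lambda|^{1}$ from the $\lambda(A_0-\lambda)^{-1}$ term and the uniformly bounded $(I+C_1(\lambda))^{-1}$ to land on $|\lambda|^{-1/2+\varepsilon}$. One must also be careful that all these factorizations are valid only for $\lambda$ below some threshold $\xi:=\min\{\xi_1,\xi_2,\xi_3\}$, which suffices since the claim is only an asymptotic statement as $\lambda\to-\infty$; the estimate for $\widetilde M$ follows verbatim by replacing $V$ with $\overline V$, $A_0$ with $\widetilde A_0$, and invoking \eqref{tA0.res2} in place of \eqref{tA0.res}.
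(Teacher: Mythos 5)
Items (i)--(iii) of your proposal are correct and essentially identical to the paper's argument: (i), (ii) are Proposition~\ref{gam-m-prop}~(iv) in the notation \eqref{taus}, and (iii) follows from $\ker(T-\lambda)\subset H^{3/2}_\Delta(\Omega)$, the mapping property of $\tau_D$, closedness (hence boundedness) of $M(\lambda)$ as a map into $H^1(\partial\Omega)$, and compactness of the embedding $H^1(\partial\Omega)\hookrightarrow L^2(\partial\Omega)$.

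The gap is in (iv), which is the heart of the statement. Your reduction via the variation formula of Proposition~\ref{gam-m-prop}~(vi) cannot work by norm estimates of the individual terms: that formula reads $M(\lambda)=\widetilde M(\lambda_0)^*+(\lambda-\myoverline\lambda_0)\,\widetilde\gamma(\lambda_0)^*\gamma(\lambda)$, so the ``bounded-in-$\lambda$ correction'' you discard contains the \emph{fixed nonzero} operator $\widetilde M(\lambda_0)^*$ (and, in your expanded form, also the term $(\lambda-\myoverline\lambda_0)\widetilde\gamma(\lambda_0)^*\gamma(\lambda_0)$, which grows linearly in $|\lambda|$). Decay of $\|M(\lambda)\|$ can only come from exact cancellation between these pieces, which separate norm bounds cannot capture; even a genuinely bounded correction would already contradict the claimed bound $C|\lambda|^{-1/2+\varepsilon}\to 0$. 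The subsequent power counting is also inconsistent: the factorization \eqref{tA0.res}/\eqref{tA0.res2} supplies a \emph{total} resolvent power $(H_N-\lambda)^{-1}$, each application of Lemma~\ref{lemma_bounded} consumes $1/4+\delta$ of it, and the operators $\gamma(\lambda_0),\widetilde\gamma(\lambda_0)^*$ are frozen at $\lambda_0$ and cannot be traded for $\lambda$-dependent powers $(H_N-\lambda)^{-3/4}$; there is no ``surplus power'' to draw on, and with your extra scalar factor $\lambda$ the count gives growth $|\lambda|^{1/2+2\delta}$, not decay.

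The correct (and the paper's) route avoids the factor $\lambda$ altogether: for real $\lambda$ sufficiently negative one writes, using Proposition~\ref{gam-m-prop}~(iii),
\begin{equation*}
M(\lambda)=\Gamma_1\gamma(\lambda)=\Gamma_1\bigl(\widetilde\Gamma_1(\widetilde A_0-\lambda)^{-1}\bigr)^*,
\end{equation*}
inserts $(\widetilde A_0-\lambda)^{-1}=(H_N-\lambda)^{-1/2}(I+C_1(\lambda)^*)^{-1}(H_N-\lambda)^{-1/2}$ from \eqref{tA0.res2}, and splits each half-power as $(H_N-\lambda)^{-1/4-\varepsilon/2}(H_N-\lambda)^{-1/4+\varepsilon/2}$. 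Lemma~\ref{lemma_bounded} absorbs $\Gamma_1(H_N-\lambda)^{-1/4-\varepsilon/2}$ and its adjoint counterpart into uniformly bounded operators $C_2(\lambda)$, $\|(I+C_1(\lambda))^{-1}\|\le 2$, and $\|(H_N-\lambda)^{-1/4+\varepsilon/2}\|\le|\lambda|^{-1/4+\varepsilon/2}$, yielding $\|M(\lambda)\|\le 2\|C_2(\lambda)\|^2|\lambda|^{-1/2+\varepsilon}$; the estimate for $\widetilde M$ follows analogously. Your proposal correctly identifies all the ingredients (Lemma~\ref{lemma_bounded}, \eqref{tA0.res}--\eqref{tA0.res2}, the splitting of half-powers), but the algebraic reduction you chose to combine them is not valid as stated.
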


\begin{proof}
Items (i) and (ii) are immediate consequences from Proposition~\ref{gam-m-prop}~(iv). It is also clear from the mapping properties of the Dirichlet trace 
$\tau_D$ in Theorem~\ref{traced} that 
$\ran M(\lambda)\subset H^1(\partial\Omega)$ and $\ran\widetilde M(\mu)\subset H^1(\partial\Omega)$. Furthermore, it is easy to check that $M(\lambda)$ 
and $\widetilde M(\mu)$ are closed as operators from $L^2(\partial\Omega)$ to $H^1(\partial\Omega)$ and hence bounded. Since $\partial\Omega$ is compact 
the embedding $H^1(\partial\Omega)\hookrightarrow L^2(\partial\Omega)$ is a compact operator and this implies (iii). 
In order to verify the claim on $M$ in (iv) (the assertion on $\widetilde{M}$ can be shown in a similar way) we employ the resolvent formula \eqref{tA0.res2} and Lemma~\ref{lemma_bounded}. In detail, with $\varepsilon>0$ small and $\lambda<0$ sufficiently negative, 
\begin{equation*} 
\begin{split}
M(\lambda)&=\Gamma_1\gamma(\lambda)^{**}\\
          &=\Gamma_1\bigl(\widetilde\Gamma_1(\widetilde A_0-\lambda)^{-1}\bigr)^*\\
          &=\Gamma_1\bigl(\widetilde\Gamma_1(H_N-\lambda)^{-1/2} (I+ C_1(\lambda)^*)^{-1}(H_N-\lambda)^{-1/2}\bigr)^*\\
          &=\Gamma_1\bigl(\widetilde\Gamma_1(H_N-\lambda)^{-1/4-\varepsilon/2}(H_N-\lambda)^{-1/4+\varepsilon/2} (I+ C_1(\lambda)^*)^{-1} 
          \\ & \hspace{4cm} \times (H_N-\lambda)^{-1/4+\varepsilon/2}(H_N-\lambda)^{-1/4-\varepsilon/2}\bigr)^*
          \\
          & = \Gamma_1 (H_N-\lambda)^{-1/4-\varepsilon/2} (H_N-\lambda)^{-1/4+\varepsilon/2} (I+ C_1(\lambda))^{-1}  
          \\ & \hspace{4cm} \times (H_N-\lambda)^{-1/4+\varepsilon/2} (\widetilde\Gamma_1(H_N-\lambda)^{-1/4-\varepsilon/2})^*.
\end{split}
\end{equation*}
By Lemma~\ref{lemma_bounded} the operator $\Gamma_1 (H_N-\lambda)^{-1/4-\varepsilon/2} = \widetilde\Gamma_1(H_N-\lambda)^{-1/4-\varepsilon/2}$ admits 
a uniformly bounded extension $C_2(\lambda)$, thus for all $\lambda<0$ sufficiently negative (recall that $\|C_1(\lambda)\|\leq 1/2$ from Proposition~\ref{prop:wt.A0.res})
\begin{equation*}
\|M(\lambda)\| \leq \|C_2(\lambda)\| |\lambda|^{-1/4+\varepsilon/2} 2 |\lambda|^{-1/4+\varepsilon/2} \|C_2(\lambda)^*\| = 2  \|C_2(\lambda)\|^2 |\lambda|^{-1/2+\varepsilon},
\end{equation*}
as claimed.
\end{proof}

Finally, we formulate a corollary of Theorem~\ref{bssatz} in the context of Schr\"odinger operators with complex potentials satisfying 
Assumption~\ref{assi}. For simplicity we assume that the parameter $B$ in the boundary condition is a bounded everywhere defined operator in $L^2(\partial\Omega)$, so that the condition $1\in\rho(BM(\lambda_0))$ in Theorem~\ref{bssatz} is satisfied for all 
$\lambda_0<0$ sufficiently negative
by
Proposition~\ref{mprop}~(iv).

\begin{corollary} \label{corollary_Robin} 
Let Assumption~\ref{assi} be satisfied and let
$B$ be a bounded everywhere defined operator in $L^2(\partial\Omega)$. 
Then 
\begin{equation*}
 A_B=-\Delta+ V,\quad \dom A_B=\bigl\{f\in H^{3/2}_\Delta(\Omega):\tau_N f=B\tau_D f\bigr\},
\end{equation*}
is a closed operator with a nonempty resolvent set, there exists $\xi_4<0$ such that $(-\infty,\xi_4) \subset \rho(A_0)\cap\rho(A_{B})$, and the Krein-type resolvent formula
\begin{equation}\label{Eq_Krein_formula}
(A_{B}-\lambda)^{-1}=(A_0-\lambda)^{-1}+\gamma(\lambda)\bigl(I-BM(\lambda)\bigr)^{-1}B\widetilde\gamma(\myoverline{\lambda})^*
\end{equation}
is valid for all $\lambda \in \rho(A_0) \cap \rho(A_B)$, where $\gamma$ and $\widetilde\gamma$ are the $\gamma$-fields associated to the generalized boundary triple 
$\{L^2(\partial\Omega), (\Gamma_0, \Gamma_1), (\widetilde\Gamma_0, \widetilde\Gamma_1)\}$ and $M$ is the Weyl function.
\end{corollary}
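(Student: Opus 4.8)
The plan is to derive Corollary~\ref{corollary_Robin} directly from Theorem~\ref{bssatz}~(i) applied to the generalized boundary triple $\{L^2(\partial\Omega),(\Gamma_0,\Gamma_1),(\widetilde\Gamma_0,\widetilde\Gamma_1)\}$ from Theorem~\ref{gbthurra}, together with the decay estimate in Proposition~\ref{mprop}~(iv). Note first that with the boundary maps $\Gamma_0=\tau_N$, $\Gamma_1=\tau_D$, the Robin domain $\{f\in H^{3/2}_\Delta(\Omega):\tau_N f=B\tau_D f\}$ is precisely $\dom A_B$ in \eqref{a123}, so the abstract $A_B$ coincides with the operator in the statement.

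First I would check the two hypotheses of Theorem~\ref{bssatz}~(i). Since $B$ is bounded and everywhere defined on $L^2(\partial\Omega)$, the inclusion $\ran(\Gamma_1\upharpoonright\ker\Gamma_0)\subset\dom B=L^2(\partial\Omega)$ is trivial, and $B$ is in particular closable. For the condition $1\in\rho(BM(\lambda_0))$ for some $\lambda_0\in\rho(A_0)$, I would invoke Proposition~\ref{mprop}~(iv): for every $\varepsilon>0$ there is $C(\varepsilon)$ with $\|M(\lambda)\|\le C|\lambda|^{-1/2+\varepsilon}$ as $\lambda\to-\infty$. Fixing $\varepsilon=1/4$, say, we get $\|BM(\lambda)\|\le\|B\|\,C|\lambda|^{-1/4}\to 0$ as $\lambda\to-\infty$ along the negative real axis, which by Theorem~\ref{gbthurra} lies in $\rho(A_0)$ for $\lambda<\xi_1$. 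Hence there is $\xi_4<0$, $\xi_4<\xi_1$, such that $\|BM(\lambda)\|<1$, and therefore $I-BM(\lambda)$ is boundedly invertible by the Neumann series, i.e.\ $1\in\rho(BM(\lambda))$, for all $\lambda<\xi_4$. In particular $1\in\rho(BM(\lambda_0))$ for any such $\lambda_0$.

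With both hypotheses verified, Theorem~\ref{bssatz}~(i) immediately gives that $A_B$ is a closed operator, that every $\lambda_0<\xi_4$ (which lies in $\rho(A_0)$) belongs to $\rho(A_B)$—so $(-\infty,\xi_4)\subset\rho(A_0)\cap\rho(A_B)$ and in particular $\rho(A_B)\neq\emptyset$—and that the Krein-type resolvent formula \eqref{Eq_Krein_formula} holds for all $\lambda\in\rho(A_0)\cap\rho(A_B)$, with $\gamma,\widetilde\gamma$ the $\gamma$-fields and $M$ the Weyl function of the triple from Theorem~\ref{gbthurra}. This is exactly the assertion of the corollary.

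There is essentially no obstacle here: the corollary is a specialization of Theorem~\ref{bssatz}~(i), and the only nontrivial input is the verification of $1\in\rho(BM(\lambda_0))$, which is handled cleanly by the decay estimate of Proposition~\ref{mprop}~(iv). The one point that deserves a word of care is making sure the relevant $\lambda_0$ lies in $\rho(A_0)$; this is guaranteed by Theorem~\ref{gbthurra}, which provides $\xi_1<0$ with $(-\infty,\xi_1)\subset\rho(A_0)$, so it suffices to take $\xi_4\le\xi_1$ small enough that also $\|B\|\,C|\lambda|^{-1/4}<1$ for $\lambda<\xi_4$.
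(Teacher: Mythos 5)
Your proposal is correct and matches the paper's intended argument: the paper states Corollary~\ref{corollary_Robin} as a direct consequence of Theorem~\ref{bssatz}~(i), with the hypothesis $1\in\rho(BM(\lambda_0))$ for sufficiently negative $\lambda_0$ supplied by the decay estimate of Proposition~\ref{mprop}~(iv) and the domain inclusion $\ran(\Gamma_1\upharpoonright\ker\Gamma_0)\subset\dom B$ trivial for a bounded everywhere defined $B$. Your verification of these two conditions, including ensuring $\lambda_0\in\rho(A_0)$ via $\xi_1$ from Theorem~\ref{gbthurra}, is exactly the reasoning the paper relies on.
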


For completeness we remark that the operator $\widetilde\gamma(\myoverline{\lambda})^*=\Gamma_1(A_0- \lambda)^{-1}$ is closed and hence 
bounded as an operator from $L^2(\Omega)$ to $H^1(\partial\Omega)$. Since it is assumed that the boundary of the Lipschitz domain $\Omega$ is compact 
it follows that the embedding $H^1(\partial\Omega)\hookrightarrow L^2(\partial\Omega)$ is compact, and hence $\widetilde\gamma(\myoverline{\lambda})^*$
is a compact operator from $L^2(\Omega)$ to $L^2(\partial\Omega)$. Therefore, under the assumptions in Corollary~\ref{corollary_Robin} the perturbation term  
in the resolvent formula \eqref{Eq_Krein_formula} is compact in $L^2(\Omega)$ and it follows that $A_B$ is a compact perturbation of $A_0$ in resolvent 
sense.

\begin{appendix}

\section{Dirichlet and Neumann trace maps on Lipschitz domains with compact boundary}\label{appoja}

In this appendix we briefly collect some properties of the Dirichlet and Neumann trace maps on Lipschitz domains; for bounded Lipschitz domains 
the results are known from \cite{BGM25,GM11}. Recall first that for  
a bounded Lipschitz domain $\Omega \subset \R^n$, $n \ge 2$,
it is well known that for $s\in (1/2,3/2)$ the Dirichlet trace map $f\mapsto f|_{\partial\Omega}$ for $f\in C^\infty(\overline\Omega)$ admits a unique 
continuous extension
\begin{equation}\label{tauii}
		\tau_D: H^s(\Omega) \to H^{s-1/2}(\partial\Omega),\quad f\mapsto\tau_D f,\quad s\in (1/2,3/2),
	\end{equation}
and this extension has a continuous right inverse; cf. \cite[Theorem 3.38]{M00}. Similarly, if $\Omega$ is an unbounded Lipschitz domain with compact boundary
and $\chi:\Omega\rightarrow [0,1]$ is a smooth function equal to $1$ near $\partial\Omega$ and equal to $0$ sufficiently far away from $\partial\Omega$, then
one considers $\tau_D f:=\tau_D(\chi f)$, so that \eqref{tauii} extends naturally also to such domains.

Now it will be explained that the Dirichlet trace operator can be extended to the endpoints $s=1/2$ and $s=3/2$ if one assumes
some additional slight regularity in the $H^s$ spaces, that is, one considers the spaces $H_\Delta^{1/2}(\Omega)$ and $H_\Delta^{3/2}(\Omega)$ 
from \eqref{hs}. For bounded Lipschitz domains the next theorem is a variant of \cite[Theorem~3.6 and Corollary~3.7]{BGM25}, see also \cite[Lemma 3.1]{GM11}.
\begin{theorem}\label{traced}
Let Assumption~\ref{assi} \ref{asssi.Om} be satisfied.
Then for all $s\in [1/2, 3/2]$ the Dirichlet trace map \eqref{tauii} gives rise to a bounded, surjective operator
	\begin{equation}\label{eq:tauD.rest.HsDelta}
		\tau_D : H_\Delta^s(\Omega) \to H^{s-1/2}(\partial\Omega)
	\end{equation}
	(where $H_\Delta^s(\Omega)$ is equipped with the norm induced by \eqref{gn}), with bounded right-inverse. In addition, for each $s \in [1/2, 3/2]$, we have
	\begin{equation}\label{regd}
		\ker \tau_D \subset H^{3/2}(\Omega)
	\end{equation}
	and there exists $C > 0$ such that if $f \in H^{1/2}_{\Delta}(\Omega)$ and $\tau_D f = 0$, then $f \in H^{3/2}(\Omega)$ and
	\begin{equation}\label{jabadu}
		\| f \|_{H^{3/2}(\Omega)} \le C \bigl(\| f \|_{L^2(\Omega)} + \| \Delta f \|_{L^2(\Omega)}\bigr).
	\end{equation}
\end{theorem}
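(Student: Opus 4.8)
The plan is to deduce the general statement from the already-known bounded case by localizing near the compact boundary $\partial\Omega$ and handling the remaining interior and far-field regions with standard interior elliptic regularity. If $\Omega$ is a bounded Lipschitz domain, then \eqref{eq:tauD.rest.HsDelta}, \eqref{regd}, and \eqref{jabadu} are (variants of) \cite[Theorem~3.6 and Corollary~3.7]{BGM25} (see also \cite[Lemma~3.1]{GM11}), so there is nothing to prove. Hence I would assume from now on that $\Omega=\R^n\setminus\overline{\Omega_0}$ is an exterior Lipschitz domain. I fix the cutoff $\chi$ introduced before the theorem ($\chi\equiv 1$ near $\partial\Omega$, $\chi\equiv 0$ far away, $\supp\chi$ compact), choose a ball $B_R$ with $\overline{\Omega_0}\cup\supp\chi\subset B_R$ and with $\partial B_R$ at positive distance from $\partial\Omega$, and set $\Omega_R:=\Omega\cap B_R=B_R\setminus\overline{\Omega_0}$. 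Then $\Omega_R$ is a bounded Lipschitz domain whose boundary is the disjoint union $\partial\Omega_R=\partial\Omega\sqcup\partial B_R$, so that $H^{s-1/2}(\partial\Omega_R)=H^{s-1/2}(\partial\Omega)\oplus H^{s-1/2}(\partial B_R)$, and $\tau_D$ as defined on $\Omega$ (via $\tau_D f=\tau_D(\chi f)$) is compatible with the bounded-domain trace on $\Omega_R$ restricted to the $\partial\Omega$-component.

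The one point that needs care is the claim that $f\in H^s_\Delta(\Omega)$ implies $\chi f\in H^s_\Delta(\Omega_R)$ with $\|\chi f\|_{H^s_\Delta(\Omega_R)}\le C\|f\|_{H^s_\Delta(\Omega)}$ (the norm being the one induced by \eqref{gn}). Indeed $\chi f\in H^s(\Omega_R)$ since multiplication by a smooth compactly supported function preserves $H^s$, and in $\Delta(\chi f)=\chi\,\Delta f+2\,\nabla\chi\cdot\nabla f+(\Delta\chi)f$ the only nonobvious term is $\nabla\chi\cdot\nabla f$; but $\supp\nabla\chi$ is a compact set at positive distance from $\partial\Omega$, hence $\supp\nabla\chi\Subset\Omega$, and there interior elliptic regularity gives $f\in H^2_\loc$ with $\|\nabla f\|_{L^2(\supp\nabla\chi)}\le C(\|f\|_{L^2(\Omega)}+\|\Delta f\|_{L^2(\Omega)})$. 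Since $\chi\equiv 0$ near $\partial B_R$, the trace of $\chi f$ on the $\partial B_R$-component vanishes, so $\tau_D f=\tau_D(\chi f)\in H^{s-1/2}(\partial\Omega)$, and the bounded-case estimate on $\Omega_R$ yields $\|\tau_D f\|_{H^{s-1/2}(\partial\Omega)}\le C\|\chi f\|_{H^s_\Delta(\Omega_R)}\le C\|f\|_{H^s_\Delta(\Omega)}$, which is the boundedness of \eqref{eq:tauD.rest.HsDelta}.

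For \eqref{regd} and \eqref{jabadu}: if $\tau_D f=0$, then $\tau_D(\chi f)=0$ on all of $\partial\Omega_R$, so the bounded case gives $\chi f\in H^{3/2}(\Omega_R)$ with $\|\chi f\|_{H^{3/2}(\Omega_R)}\le C(\|f\|_{L^2(\Omega)}+\|\Delta f\|_{L^2(\Omega)})$; extending by zero (trivial since $\chi f\equiv 0$ near $\partial B_R$) gives $\chi f\in H^{3/2}(\Omega)$. The complementary piece $(1-\chi)f$ vanishes near $\partial\Omega$, so I would extend it by zero into $\Omega_0$; the same product rule and the interior estimate on $\supp\nabla\chi$ show that this extension lies in $L^2(\R^n)$ with Laplacian in $L^2(\R^n)$, hence in $H^2(\R^n)$ by the elementary Fourier-side bound $\|u\|_{H^2(\R^n)}\le C(\|u\|_{L^2(\R^n)}+\|\Delta u\|_{L^2(\R^n)})$, with norm $\le C(\|f\|_{L^2(\Omega)}+\|\Delta f\|_{L^2(\Omega)})$. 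Adding the two pieces yields $f\in H^{3/2}(\Omega)$ together with \eqref{jabadu}, and in particular \eqref{regd}.

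It remains to produce a bounded right inverse: given $g\in H^{s-1/2}(\partial\Omega)$, view $(g,0)\in H^{s-1/2}(\partial\Omega_R)$, apply the bounded right inverse on $\Omega_R$ to obtain $h\in H^s_\Delta(\Omega_R)$ with $\tau_D h=(g,0)$, fix another cutoff $\chi_1\equiv 1$ near $\partial\Omega$ with $\supp\chi_1\Subset B_R$, and let the right inverse of $g$ be $\chi_1 h$ extended by zero to $\Omega$. One checks $\chi_1 h\in H^s_\Delta(\Omega)$ with $\|\chi_1 h\|_{H^s_\Delta(\Omega)}\le C\|g\|_{H^{s-1/2}(\partial\Omega)}$ exactly as above (the only delicate term $\nabla\chi_1\cdot\nabla h$ is controlled by interior regularity for $h$ away from $\partial\Omega$ inside $\Omega_R$), and $\tau_D(\chi_1 h)=\chi_1|_{\partial\Omega}\,g=g$, which also establishes surjectivity. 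The main obstacle in writing this out carefully is the low-regularity range $s\in[1/2,1)$, where $\nabla f$ need not lie globally in $L^2(\Omega)$, so the product-rule identities for $\Delta(\chi f)$, $\Delta((1-\chi)f)$, and $\Delta(\chi_1 h)$ must be justified via interior elliptic regularity on the fixed, compactly contained supports of $\nabla\chi$ and $\nabla\chi_1$; the genuinely delicate endpoint analysis at $s=1/2$ and $s=3/2$ is entirely inherited from the cited bounded-domain results \cite{BGM25,GM11}.
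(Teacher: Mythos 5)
Your proposal is correct and follows essentially the same route as the paper: reduce to the bounded-domain results of \cite{BGM25,GM11} by cutting off with $\chi$ near the compact boundary, control the commutator term $\nabla\chi\cdot\nabla f$ by interior elliptic regularity, and treat $(1-\chi)f$ via its zero extension to $\R^n$ together with the equivalence of the $H^2(\R^n)$ norm and the graph norm of $-\Delta$. The only minor differences are that you make the auxiliary bounded Lipschitz domain $\Omega_R$ and the right inverse explicit and obtain the quantitative bounds by a direct interior estimate, where the paper instead derives the analogue of \eqref{chi_bounded} from the closed graph theorem for multiplication by $\chi$ on $L^2_\Delta(\Omega)$.
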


\begin{proof}
If $\Omega$ is a bounded Lipschitz domain, then the assertions are contained in \cite[Corollary 3.7]{BGM25}. In the case that $\Omega$ is unbounded with compact boundary the assertions follow with standard localization methods.
We briefly sketch the main arguments: First of all choose a $C^\infty(\Omega)$-function 
$\chi:\Omega\rightarrow [0,1]$ such that $\chi(x)=1$ for all $x$ near $\partial\Omega$ and $\chi(x)=0$ for all $x$ sufficiently far away from $\partial\Omega$.
For $f\in H_\Delta^s(\Omega)$ we obtain from $\Delta f\in L^2(\Omega)$ and standard elliptic regularity that $f\in H^2_{\textrm{loc}}(\Omega)$ and hence, as $\nabla \chi$ is compactly supported in $\Omega$,
$\nabla \chi \cdot \nabla f \in H^1(\Omega)$. 
Therefore,
\begin{equation} \label{distributional_Laplace}
\begin{split}
 \Delta (\chi f)&=(\Delta\chi) f + 2\nabla\chi\cdot\nabla f + \chi (\Delta f) \in L^2(\Omega),
\end{split}
 \end{equation}
and as $\chi f\in H^s(\Omega)$, see \cite[Theorem~3.20]{M00},
it follows that $\chi f\in H^s_\Delta(\Omega)$, and thus also $(1-\chi) f\in H^s_\Delta(\Omega)$. In particular, as $(1-\chi) f$ vanishes near $\partial\Omega$
one considers the Dirichlet trace 
$$\tau_D f:=\tau_D (\chi f),\qquad f=\chi f+ (1-\chi)f\in  H_\Delta^s(\Omega).$$ 
Then the assertions of the theorem follow when taking into account that $\chi f$ vanishes  
sufficiently far away from $\partial\Omega$, so that the properties for the Dirichlet trace on a bounded Lipschitz domain can be used. For 
the estimate \eqref{jabadu} one uses that 
\begin{equation} \label{chi_bounded}
  \| \chi g \|_{L^2(\Omega)} + \| \Delta (\chi g) \|_{L^2(\Omega)} \leq C \bigl( \| g \|_{L^2(\Omega)} + \| \Delta g \|_{L^2(\Omega)} \bigr), \quad g \in L^2_\Delta(\Omega) := H^0_\Delta(\Omega).
\end{equation}
Indeed, as in~\eqref{distributional_Laplace} one finds for $g \in L^2_\Delta(\Omega)$ that $\chi g \in L^2_\Delta(\Omega)$. Moreover, as $L^2_\Delta(\Omega)$ is continuously embedded in $L^2(\Omega)$ and the multiplication by $\chi$ gives rise to a bounded operator in $L^2(\Omega)$, one can verify that the multiplication by $\chi$ is a closed operator in $L^2_\Delta(\Omega)$. Therefore, by the closed graph theorem, one gets that~\eqref{chi_bounded} is true.
Therefore, using \eqref{jabadu} on bounded Lipschitz domains for $\chi f$ and~\eqref{chi_bounded} we conclude
\begin{equation} \label{jabadu1}
\begin{split}
 \Vert \chi f\Vert_{H^{3/2}(\Omega)}&\leq C \bigl(\| \chi f \|_{L^2(\Omega)} + \| \Delta (\chi f) \|_{L^2(\Omega)}\bigr)\\
 &\leq C \bigl(\| f \|_{L^2(\Omega)}+ \|  \Delta f \|_{L^2(\Omega)}\bigr).
 \end{split}
\end{equation}
To get a similar estimate for $(1-\chi) f$, note first that, as $1-\chi$ is zero in a neighborhood of $\partial \Omega$, one has for the zero extension $\widetilde{g}$ of $(1-\chi) f$ by standard elliptic regularity arguments that $\widetilde{g} \in H^2(\mathbb{R}^n)$. As the graph norm associated with $-\Delta$ in $\mathbb{R}^n$ is equivalent with the norm in $H^2(\mathbb{R}^n)$, the estimate 
\begin{equation*}
  \begin{split}
    \Vert (1-\chi) f\Vert_{H^{3/2}(\Omega)} &\leq \| \widetilde{g} \|_{H^2(\mathbb{R}^n)} \leq C \bigl( \| \Delta \widetilde{g} \|_{L^2(\mathbb{R}^n)} + \| \widetilde{g} \|_{L^2(\mathbb{R}^n)} \bigr) \\
    &= C \bigl( \| \Delta ((1-\chi) f) \|_{L^2(\Omega)} + \| (1-\chi) f \|_{L^2(\Omega)} \bigr)
  \end{split}
\end{equation*}
holds. Next, one finds as in~\eqref{chi_bounded} that the multiplication by $1-\chi$ gives rise to a bounded operator in $L^2_\Delta(\Omega)$. By combining this with the last displayed formula  one verifies $\Vert (1-\chi) f\Vert_{H^{3/2}(\Omega)}\leq C (\| f \|_{L^2(\Omega)}+ \|  \Delta f \|_{L^2(\Omega)})$, which
finally implies with~\eqref{jabadu1} the estimate in \eqref{jabadu} for unbounded domains.
\end{proof}

Now we turn to the Neumann trace operator $f\mapsto \nu\cdot \nabla f|_{\partial\Omega}$ for $f\in C^\infty(\Omega)\cap L^2(\Omega)$. 
For the case of a bounded Lipschitz domain $\Omega$ the next result is contained in \cite[Theorem 5.4 and Corollary 5.7]{BGM25} (see also \cite[Lemma 3.2]{GM11})
and for the case that $\Omega$ is unbounded with compact boundary the same localization arguments as in the proof of Theorem~\ref{traced}
can be applied to verify the statement; we leave the details to the reader. We recall that $(H^{t}(\partial\Omega))^*=H^{-t}(\partial\Omega)$ for $t\in [-1,1]$. 

\begin{theorem}\label{tracen}
Let Assumption~\ref{assi} \ref{asssi.Om} be satisfied.
Then for all $s\in [1/2, 3/2]$ the Neumann 
trace map induces a bounded, surjective operator
	\begin{equation}\label{eq:tauN.bgm.ext}
		\tau_N : H_\Delta^s(\Omega) \to H^{s-3/2}(\partial \Omega)
	\end{equation}
	(where $H_\Delta^s(\Omega)$ is equipped with the norm induced by \eqref{gn}), with bounded right-inverse. In addition, for each $s \in [1/2, 3/2]$, we have
		\begin{equation}\label{regn}
			\ker \tau_N \subset H^{3/2}(\Omega)
		\end{equation}
		and there exists $C > 0$ such that if $f \in H^{1/2}_{\Delta}(\Omega)$ and $\tau_N f = 0$, then $f \in H^{3/2}(\Omega)$ and
		\begin{equation}\label{boundn}
			\| f \|_{H^{3/2}(\Omega)} \le C \bigl(\| f \|_{L^2(\Omega)} + \| \Delta f \|_{L^2(\Omega)}\bigr).
		\end{equation}
		Furthermore, if $f \in H_\Delta^{3/2}(\Omega)$, then $\tau_N f = \nu \cdot \tau_D (\nabla f)$.%
\end{theorem}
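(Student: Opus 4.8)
The plan is to reduce all assertions to the case of a bounded Lipschitz domain, where they are available from \cite[Theorem~5.4 and Corollary~5.7]{BGM25} (see also \cite[Lemma~3.2]{GM11}), and to transfer them to an exterior domain by the same localization device used in the proof of Theorem~\ref{traced}. Fix a cut-off $\chi\in C^\infty(\Omega)$ with $\chi\equiv 1$ near $\partial\Omega$ and $\chi\equiv 0$ far away from $\partial\Omega$, and for $f\in H^s_\Delta(\Omega)$, $s\in[1/2,3/2]$, write $f=\chi f+(1-\chi)f$. Exactly as in \eqref{distributional_Laplace}, interior elliptic regularity gives $f\in H^2_{\textrm{loc}}(\Omega)$, hence $\Delta(\chi f)=(\Delta\chi)f+2\nabla\chi\cdot\nabla f+\chi\Delta f\in L^2(\Omega)$, and $\chi f\in H^s(\Omega)$ by \cite[Theorem~3.20]{M00}; thus $\chi f\in H^s_\Delta(\Omega)$ and likewise $(1-\chi)f\in H^s_\Delta(\Omega)$. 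Since $\chi f$ is supported in a bounded set that may be regarded as sitting inside a bounded Lipschitz domain, while $(1-\chi)f$ vanishes near $\partial\Omega$, one sets $\tau_N f:=\tau_N(\chi f)$; this is independent of the choice of $\chi$, since two admissible cut-offs differ by a function vanishing near $\partial\Omega$.

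First I would prove boundedness and surjectivity of $\tau_N:H^s_\Delta(\Omega)\to H^{s-3/2}(\partial\Omega)$. Boundedness follows by composing the bounded-domain trace estimate with $\|\chi f\|_{H^s_\Delta(\Omega)}\le C\|f\|_{H^s_\Delta(\Omega)}$; the latter is obtained just as in \eqref{chi_bounded}, via the closed graph theorem, using that multiplication by $\chi$ is bounded on $H^s(\Omega)$ and on $L^2(\Omega)$ and maps $H^s_\Delta(\Omega)$ into itself. For surjectivity with bounded right-inverse I would intersect $\Omega$ with a large ball to obtain a bounded Lipschitz domain $D$ having $\partial\Omega$ as one component of its boundary, prescribe the given datum on $\partial\Omega$ and zero on the rest of $\partial D$, apply the bounded right-inverse of $\tau_N$ on $D$, multiply the resulting function by a cut-off equal to $1$ near $\partial\Omega$ and to $0$ near $\partial D\setminus\partial\Omega$, and extend by zero to $\Omega$; this produces an element of $H^s_\Delta(\Omega)$ with the prescribed Neumann trace, depending boundedly on the datum.

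Next I would establish the regularity statement. Let $f\in H^{1/2}_\Delta(\Omega)$ with $\tau_N f=0$, i.e.\ $\tau_N(\chi f)=0$. Applying \eqref{boundn} on a bounded Lipschitz domain to $\chi f$, together with the bounds for multiplication by $\chi$ above, gives $\chi f\in H^{3/2}(\Omega)$ and $\|\chi f\|_{H^{3/2}(\Omega)}\le C(\|f\|_{L^2(\Omega)}+\|\Delta f\|_{L^2(\Omega)})$. For $(1-\chi)f$, the zero extension $\widetilde g$ of $(1-\chi)f$ to $\R^n$ satisfies $\Delta\widetilde g\in L^2(\R^n)$ since $1-\chi$ vanishes near $\partial\Omega$; because the graph norm of $-\Delta$ on $\R^n$ is equivalent to the $H^2(\R^n)$-norm, one obtains $(1-\chi)f\in H^{3/2}(\Omega)$ with $\|(1-\chi)f\|_{H^{3/2}(\Omega)}\le C(\|f\|_{L^2(\Omega)}+\|\Delta f\|_{L^2(\Omega)})$, using once more that multiplication by $1-\chi$ is bounded on $H^0_\Delta(\Omega)$. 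Summing the two contributions yields $f\in H^{3/2}(\Omega)$ and \eqref{boundn}, and in particular $\ker\tau_N\subset H^{3/2}(\Omega)$ for every $s\in[1/2,3/2]$.

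Finally, for $f\in H^{3/2}_\Delta(\Omega)$ the identity $\tau_N f=\nu\cdot\tau_D(\nabla f)$ is proved on bounded domains in \cite{BGM25}; in the exterior case it follows by the same localization, since $\nabla f$ belongs to $H^{1/2}_\Delta$ on a bounded neighbourhood of $\partial\Omega$, so $\tau_D(\nabla f)$ is well defined by Theorem~\ref{traced}, and both sides depend only on $f$ near $\partial\Omega$, where $\chi\equiv 1$. I expect the only point requiring real care to be the verification that multiplication by $\chi$ and by $1-\chi$ defines a bounded (equivalently, closed) operator on the graph-norm spaces $H^s_\Delta(\Omega)$, and that the zero extension of $(1-\chi)f$ genuinely lies in $H^2(\R^n)$; both are handled by the closed graph theorem and interior elliptic regularity exactly as in the proof of Theorem~\ref{traced}, so the remaining details are routine.
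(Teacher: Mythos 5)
Your proposal is correct and follows essentially the same route as the paper: the bounded case is taken from \cite[Theorem 5.4 and Corollary 5.7]{BGM25}, and the unbounded case with compact boundary is handled by the cut-off/localization argument from the proof of Theorem~\ref{traced}, which is precisely what the paper indicates (leaving the details to the reader). Your added details --- the closed-graph argument for multiplication by $\chi$ on the graph-norm spaces, the zero-extension/$H^2(\R^n)$ step for $(1-\chi)f$, and the construction of a bounded right-inverse by truncating $\Omega$ to a bounded Lipschitz domain --- are consistent with how the paper carries out the analogous steps for $\tau_D$.
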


Next, we recall a version of the second Green identity for the trace operators in Theorem~\ref{traced} and Theorem~\ref{tracen}.
Observe first that for $s \in [1/2, 3/2]$ and $f\in H_\Delta^s(\Omega)$ we have 
\begin{equation}\label{ddd}
 \tau_D f\in  H^{s-1/2}(\partial\Omega)\quad\text{and}\quad \tau_N f\in  H^{s-3/2}(\partial\Omega)=\bigl(H^{3/2-s}(\partial\Omega)\bigr)^*,
\end{equation}
and in the same way for $g \in H_\Delta^{2-s}(\Omega)$ we have 
\begin{equation}\label{nnn}
 \tau_D g\in  H^{3/2-s}(\partial\Omega)\quad\text{and}\quad \tau_N g\in  H^{1/2-s}(\partial\Omega)=\bigl(H^{s-1/2}(\partial\Omega)\bigr)^*.
\end{equation}
Then for $s \in [1/2, 3/2]$ and $f\in H_\Delta^s(\Omega)$, $g \in H_\Delta^{2-s}(\Omega)$ one has 
		\begin{equation}\label{ggg22}
			\begin{aligned}
				(-\Delta f, g)_{L^2(\Omega)} - (f, -\Delta g)_{L^2(\Omega)} &= \langle \tau_D f, \tau_N g \rangle_{H^{s-1/2}(\partial\Omega) \times (H^{s-1/2}(\partial\Omega))^*}\\
				&\quad - \langle \tau_N f, \tau_D g \rangle_{(H^{3/2-s}(\partial\Omega))^* \times H^{3/2-s}(\partial\Omega)};
			\end{aligned}
		\end{equation}
		cf. \cite[Corollary 5.7]{BGM25} for bounded $\Omega$; the case of unbounded Lipschitz domains with compact boundary can again be handled with a localization argument as in the proof of Theorem~\ref{traced}. In particular, for $f,g\in H_\Delta^{3/2}(\Omega)$ the traces in \eqref{ddd} and \eqref{nnn} 
		are contained in $L^2(\partial\Omega)$ and \eqref{ggg22} takes the form
		\begin{equation}\label{greeni}
				(-\Delta f, g)_{L^2(\Omega)} - (f, -\Delta g)_{L^2(\Omega)} = ( \tau_D f, \tau_N g) _{L^2(\partial\Omega)} - 
				( \tau_N f, \tau_D g)_{L^2(\partial\Omega)}.
		\end{equation}
		
Furthermore, for $f\in H_\Delta^{1}(\Omega)$ and $g\in H^1(\Omega)$ the first Green identity
\begin{equation*}
(-\Delta f, g)_{L^2(\Omega)}=(\nabla f,\nabla g)_{L^2(\Omega;\R^n)}-\langle \tau_N f, \tau_D g \rangle_{H^{-1/2}(\partial\Omega) \times H^{1/2}(\partial\Omega)}
\end{equation*}
holds; cf. \cite[Theorem 4.4~(i)]{M00}. If, in addition, $f\in H_\Delta^{3/2}(\Omega)$, then $\tau_N f$ is contained in $L^2(\partial\Omega)$ and one has 
\begin{equation}\label{usethissomewhere}
(-\Delta f, g)_{L^2(\Omega)}=(\nabla f,\nabla g)_{L^2(\Omega;\R^n)}- ( \tau_N f, \tau_D g )_{L^2(\partial\Omega)}.
\end{equation}

In the next lemma we collect an additional regularity property for the functions $f\in  H_\Delta^s(\Omega)$ that satisfy $\tau_D f=\tau_N f=0$.
The assertion follows from \eqref{regd}, \eqref{regn}, and \cite[Theorem 6.12 and Remark 5.8]{BGM25} for bounded Lipschitz domains and extends with the help of localization arguments as in the proof of Theorem~\ref{traced} also to unbounded Lipschitz domains with compact boundary.
		
\begin{lemma}\label{reglem}
Let Assumption~\ref{assi} \ref{asssi.Om} be satisfied
and let $s\in [1/2, 3/2]$. Then the Dirichlet and Neumann trace operators 
	\begin{equation*}
		\tau_D : H_\Delta^s(\Omega) \to H^{s-1/2}(\partial \Omega)\quad\text{and}\quad \tau_N : H_\Delta^s(\Omega) \to H^{s-3/2}(\partial \Omega)
	\end{equation*}
	satisfy $\ker\tau_D\cap\ker\tau_N=H^2_0(\Omega)$.
\end{lemma}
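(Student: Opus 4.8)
The plan is to prove the two inclusions of $\ker\tau_D\cap\ker\tau_N=H^2_0(\Omega)$ separately; the nontrivial one I would reduce first to the endpoint $s=3/2$ via the regularity statement \eqref{regd}, and then, in the unbounded case, to a bounded Lipschitz domain by a localization argument, where the assertion is available from \cite[Theorem~6.12 and Remark~5.8]{BGM25}.

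First I would settle the inclusion $H^2_0(\Omega)\subseteq\ker\tau_D\cap\ker\tau_N$. Since $s\in[1/2,3/2]$, the space $H^2(\Omega)$ embeds continuously into $H^s_\Delta(\Omega)$, so any sequence in $C_0^\infty(\Omega)$ converging to $f$ in $H^2(\Omega)$ converges to $f$ also in $H^s_\Delta(\Omega)$. As $\tau_D$ and $\tau_N$ are bounded on $H^s_\Delta(\Omega)$ by Theorem~\ref{traced} and Theorem~\ref{tracen} and vanish on $C_0^\infty(\Omega)$, they vanish on every $f\in H^2_0(\Omega)$, which gives this inclusion.

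For the reverse inclusion, let $f\in H^s_\Delta(\Omega)$ with $\tau_D f=\tau_N f=0$. By \eqref{regd} in Theorem~\ref{traced} we have $f\in H^{3/2}(\Omega)$, hence $f\in H^{3/2}_\Delta(\Omega)$ with the same vanishing traces, so it suffices to treat $s=3/2$. If $\Omega$ is bounded, the combination of \eqref{regd}, \eqref{regn} and \cite[Theorem~6.12 and Remark~5.8]{BGM25} yields $f\in H^2_0(\Omega)$. If $\Omega$ is unbounded with compact boundary, I would localize as in the proof of Theorem~\ref{traced}: fix $\chi\in C^\infty(\Omega)$ with $\chi\equiv1$ near $\partial\Omega$ and $\chi\equiv0$ far away, and write $f=\chi f+(1-\chi)f$. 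Choosing $R$ so large that $\R^n\setminus\Omega\subset B_R$ and $\chi$ vanishes near $\partial B_R$, the set $\Omega\cap B_R$ is a bounded Lipschitz domain (its boundary being the disjoint union of $\partial\Omega$ and a sphere), $\chi f$ restricts to an element of $H^{3/2}_\Delta(\Omega\cap B_R)$ that vanishes near the artificial part $\partial B_R\cap\Omega$, and $\tau_D(\chi f)=\chi|_{\partial\Omega}\,\tau_D f=0$ while $\tau_N(\chi f)=\tau_N f=0$, since $\nabla\chi\equiv0$ near $\partial\Omega$ forces $\nabla(\chi f)=\nabla f$ there and $\tau_N g=\nu\cdot\tau_D(\nabla g)$ by Theorem~\ref{tracen}. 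The bounded case then gives $\chi f\in H^2_0(\Omega\cap B_R)$, hence $\chi f\in H^2_0(\Omega)$ after extension by zero. The remaining piece $(1-\chi)f$ is supported in a fixed neighbourhood of infinity at positive distance from $\partial\Omega$; extending it by zero to $\R^n$ one obtains an element of $H^{3/2}(\R^n)$ whose distributional Laplacian equals the zero extension of $\Delta((1-\chi)f)$ (no boundary terms arise, as the function vanishes near $\partial\Omega$) and therefore lies in $L^2(\R^n)$, so by elliptic regularity on $\R^n$ the extension belongs to $H^2(\R^n)$; a cut-off at infinity together with mollification then shows $(1-\chi)f\in H^2_0(\Omega)$. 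Adding the two contributions gives $f\in H^2_0(\Omega)$.

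I expect the main obstacle to lie in the unbounded case, specifically in verifying that multiplication by $\chi$ leaves the Dirichlet and Neumann traces on $\partial\Omega$ unchanged (which hinges on $\nabla\chi\equiv0$ near $\partial\Omega$ together with $\tau_N g=\nu\cdot\tau_D(\nabla g)$), that $\Omega\cap B_R$ is genuinely a Lipschitz domain so that the cited bounded result applies verbatim, and that the far-field part $(1-\chi)f$ — which in general is not compactly supported — can still be approximated in $H^2(\Omega)$ by elements of $C_0^\infty(\Omega)$; the latter point is handled by combining a cut-off at infinity with mollification, using that $(1-\chi)f$ vanishes in a fixed neighbourhood of $\partial\Omega$.
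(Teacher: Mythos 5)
Your proof is correct and takes essentially the same route as the paper: the easy inclusion $H^2_0(\Omega)\subset\ker\tau_D\cap\ker\tau_N$ by continuity of the trace maps, and the reverse inclusion via the regularity statements \eqref{regd}, \eqref{regn}, the bounded-domain result \cite[Theorem 6.12 and Remark 5.8]{BGM25}, and a localization with a cut-off $\chi$ as in the proof of Theorem~\ref{traced} for unbounded Lipschitz domains with compact boundary. The paper only sketches these localization details, which you carry out explicitly (including the far-field piece $(1-\chi)f$ via zero extension, elliptic regularity in $\R^n$, and approximation by $C_0^\infty(\Omega)$).
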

		
\end{appendix}		

\subsection*{Funding}  
This research was funded in part by the Austrian Science Fund (FWF) 10.55776/P 33568-N. For the purpose of open access, the author has applied a CC BY public copyright licence to any Author Accepted Manuscript version arising from this submission.

\subsection*{Data availability statement}

Data sharing not applicable to this article as no datasets were generated or analysed during the current study.

\subsection*{Competing Interests}

The authors have no competing interests to declare that are relevant to the content of this article.

\end{document}